\newenvironment{myabstract}{\par\noindent
{\bf Abstract . } \small }
{\par\vskip8pt minus3pt\rm}
\newcounter{item}[section]
\newcounter{kirshr}
\newcounter{kirsha}
\newcounter{kirshb}
\newenvironment{enumarab}{\setcounter{kirshb}{1}
\begin{list}{(\arabic{kirshb})}{\usecounter{kirshb}} }{\end{list}}
\newtheorem{theorem}{Theorem}[section]
\newtheorem{lemma}[theorem]{Lemma}
\newtheorem{corollary}[theorem]{Corollary}
\theoremstyle{definition}
\newtheorem{example}[theorem]{Example}
\newtheorem{definition}[theorem]{Definition}
\def\R{\mathbb{R}}
\def\Q{\mathbb{Q}}
\def\C{{\mathfrak{C}}}
\def\Fm{{\mathfrak{Fm}}}
\def\At{{\bf At}}
\def\Nr{{\mathfrak{Nr}}}
\def\Sg{{\mathfrak{Sg}}}
\def\Fm{{\mathfrak{Fm}}}
\def\A{{\mathfrak{A}}}
\def\B{{\mathfrak{B}}}
\def\C{{\mathfrak{C}}}
\def\D{{\mathfrak{D}}}
\def\M{{\mathfrak{M}}}
\def\N{{\mathfrak{N}}}
\def\Sn{{\mathfrak{Sn}}}
\def\CA{{\bf CA}}
\def\QEA{{\bf QEA}}
\def\Df{{\bf Df}}
\def\Lf{{\bf Lf}}
\def\PA{{\bf PA}}
\def\PEA{{\bf PEA}}
\def\K{{\bf K}}
\def\K{{\bf K}}
\def\RCA{{\bf RCA}}
\def\Rd{{\ Rd}}
\def\(R)RA{{\bf (R)RA}}
\def\RA{{\bf RA}}
\def\R{\mathbb{R}}
\def\Q{\mathbb{Q}}
\def\Sc{{\bf Sc}}
\def\Id{{\bf Id}}
\def\c #1{{\cal #1}}
 \def\CA{{\sf CA}}
\def\B{{\sf B}}
\def\G{{\sf G}}
\def\w{{\sf w}}
\def\y{{\sf y}}
\def\g{{\sf g}}
\def\b{{\sf b}}
\def\r{{\sf r}}
\def\K{{\sf K}}
\def\tp{{\sf tp}}
 \def\Cm{{\mathfrak{Cm}}}
\def\Nr{{\mathfrak{Nr}}}
\def\restr #1{{\restriction_{#1}}}
\def\cyl#1{{\sf c}_{#1}}
\def\diag#1#2{{\sf d}_{#1#2}}
\def\R{\sf R}
\def\Ra{{\mathfrak{Ra}}}
\def\Ca{{\mathfrak{Ca}}}
\def\set#1{\{#1\} }
\def\Ra{{\mathfrak{Ra}}}
\def\Nr{{\mathfrak{Nr}}}
\def\Tm{{\mathfrak{Tm}}}
\def\A{{\mathfrak{A}}}
\def\B{{\mathfrak{B}}}
\def\C{{\mathfrak{C}}}
\def\D{{\mathfrak{D}}}
\def\E{{\mathfrak{E}}}
\def\A{{\mathfrak{A}}}
\def\B{{\mathfrak{B}}}
\def\C{{\mathfrak{C}}}
\def\D{{\mathfrak{D}}}
\def\E{{\mathfrak{E}}}
\def\U{{\mathfrak{U}}}
\def\Bb{{\mathfrak{Bb}}}
\def\L{{\mathfrak{L}}}
\def\Rd{{\mathfrak{Rd}}}
\def\Bb{{\mathfrak{Bb}}}
\def\At{{\mathfrak{At}}}
\def\L{{\mathfrak{L}}}
\def\CA{{\bf CA}}
\def\RA{{\bf RA}}
\def\RCA{{\bf RCA}}
\def\G{{\bf G}}
\def\F{{\mathfrak{F}}}
\def\At{{\sf{At}}}
\def\N{\mathbb{N}}
\def\R{\mathfrak{R}}
\def\Cs{{\sf Cs}}
\def\cyl#1{{\sf c}_{#1}}
\def\diag#1#2{{\sf d}_{#1#2}}
\def\c #1{{\cal #1}}
\def\pa{$\forall$}
\def\pe{$\exists$}
\def\ef{Ehren\-feucht--Fra\"\i ss\'e}
\def\nodes{{\sf nodes}}
\def\restr #1{{\restriction_{#1}}}
\def\Ra{{\mathfrak{Ra}}}
\def\Nr{{\mathfrak{Nr}}}
\def\Z{{\cal Z}}
\def\CA{{\bf CA}}
\def\RCA{{\bf RCA}}
\def\c#1{{\mathcal #1}}
\def\set#1{ \{#1\}}
\def\Ca{{\mathfrak Ca}}
\def\b#1{{\bar{ #1}}}
\def\pe{$\exists$}
\def\pa{$\forall$}
\def\Cm{{\mathfrak Cm}}
\def\Sg{{\mathfrak Sg}}
\def\Rl{{\mathfrak Rl}}
\def\N{{\cal N}}
\def\ls { L\"owenheim--Skolem}
\def\At{{\sf At}}
\def\Uf{{\sf Uf}}
\def\rng{{\sf rng}}
\def\dom{{\sf dom}}
\def\Cm{{\sf Cm}}
\def\w{{\sf w}}
\def\g{{\sf g}}
\def\y{{\sf y}}
\def\r{{\sf r}}
\def\tp{{\sf tp}}
\def\cyl#1{{\sf c}_{#1}}
\def\diag#1#2{{\sf d}_{#1#2}}
\def\ws{winning strategy}
\def\ef{Ehren\-feucht--Fra\"\i ss\'e}
 \def\CA{{\sf CA}}
\def\Cs{{\sf Cs}}
\def\RCA{{\sf RCA}}
\def\RA{{\sf RA}}
\def\PA{{\sf PA}}
\def\PEA{\sf PEA}
\def\QEA{{\sf QEA}}
\def\y{{\sf y}}
\def\g{{\sf g}}
\def\r{{\sf r}}
\def\w{{\sf w}}
\def\Z{{\mathbb{Z}}}
\def\N{{\mathbb{N}}}
\def\U{{\mathfrak{U}}}
\def\Gp{{\sf Gp}}
\def\c{{\sf c}}
\def\s{{\sf s}}
\def\Id{{\sf Id}}
\def\Sc{{\sf Sc}}
\def\Df{{\sf Df}}
\def\Lf{{\sf Lf}}
\def\K{{\sf K}}
\def\nodes{{\sf nodes}}
\def\G{{\bold G}}
\def\Sc{{\sf Sc}}
\def\Df{{\sf Df}}
\def\PA{{\sf PA}}
\def\Id{{\sf Id}}
\def\QEA{{\sf QEA}}
\def\s{{\sf s}}
\def\CA{{\sf CA}}
\def\K{{\sf K}}
\def\RCA{{\sf RCA}}
\def\A{{\mathfrak{A}}}
\def\Cs{{\sf Cs}}
\def\cyl#1{{\sf c}_{#1}}
\def\diag#1#2{{\sf d}_{#1#2}}
\def\Mo{{\sf Mo}}
\def\la#1{\langle#1\rangle}
\def\G{{\mathfrak{G}}}
\def\Nr{{\sf Nr}}
\def\de{Dedekind-MacNeille}
\def\Cm{{\mathfrak{Cm}}}
\def\M{{\sf M}}
\def\T{{\sf T}}
\def\VT{{\sf VT}}
\def\CRCA{{\sf CRCA}}
\def\PEA{{\sf PEA}}
\def\Nrr{\mathfrak{Nr}}
\def\Tm{{\mathfrak{Tm}}}
\def\Mo{{\sf{M}}}
\title{An infinite stratum of representability; some cylindric algebras are more representable than others}
\author{Tarek Sayed Ahmed\\
Department of Mathematics, Faculty of Science,\\
Cairo University, Giza, Egypt.
 }
\date{}
\begin{document}
\maketitle

\begin{myabstract}   Let $2<n<m\leq \omega$. Let $\CA_n$ denote the class of cylindric algebras of dimension $n$ and $\RCA_n$ 
denote the class of representable $\CA_n$s. We say that $\A\in \RCA_n$ is representable up to $m$ if $\Cm\At\A$ has an $m$-square 
representation. An $m$ square represenation is  locally relativized represenation that is classical locally only on so called $m$-squares'. Roughly 
if we zoom in by a movable window to an $m$ square representation, there will become a point determinded and depending on $m$ 
where we mistake the $m$ square-representation for a genuine classical one. 
When we zoom out the non-representable part gets more exposed. 
For $2<n<m<l\leq \omega$, an $l$ square represenation is $m$-square; the converse however is not true. The variety $\RCA_n$ 
is a limiting case coinciding with $\CA_n$s having $\omega$-square representations. 
Let $\RCA_n^m$ be the class of algebras representable up to $m$. 
We show that $\RCA_n^{m+1}\subsetneq \bold \RCA_n^m$ for  $m\geq n+2$.
\footnote{Keywords: neat reducts, 
representations, degrees of representability.  
Mathematics subject classification: 03G15.}

\end{myabstract}

\section{Introduction}
Fix finite $n>2$. Let ${\sf CRCA}_n$ denote the class of completely representable $\CA_n$s and ${\sf LCA}_n={\bf El}\CRCA_n$ be the class of algebras satisfying the Lyndon conditions.
For a class $\sf K$ of Boolean algebras with operators, let $\sf K\cap \bf At$ denote the class of atomic algebras in $\sf K$.  By modifying the games coding the Lyndon conditions 
allowing \pa\ to reuse the pebble pairs on the board, we will show 
that ${\sf LCA}_n={\bf El}\CRCA_n={\bf El}\bold S_c\Nr_n\CA_{\omega}\cap \bf At$. 
Define an $A\in \CA_n$ to be {\it strongly representable} $\iff$ $\A$ is atomic and the complex algebra of its atom structure, equivalently its \de\ completion, in symbols $\Cm\At\A$ is in $\RCA_n$. 
This is a strong form of representability; of course $\A$ itself will be in $\RCA_n$, because $\A$ embeds into $\Cm\At\A$ and $\RCA_n$ is a variety, {\it a fortiori} closed under forming subalgebras.
We denote the class of strongly representable atomic algbras of dimension $n$ by ${\sf SRCA}_n$. Nevertheless,  there are atomic simple countable algebras that are representable, 
but not strongly representable. In fact, we shall see that there is a countable simple 
atomic algebra in $\RCA_n$ such that $\Cm\At\A\notin \bold S\Nr_n\CA_{n+3}(\supset \RCA_n)$. 
So in a way some algebras are more representable than others. In fact, the following inclusions are known to hold:  
$${\sf CRCA}_n\subsetneq {\sf LCA}_n\subsetneq {\sf SRCA}_n\subsetneq {\sf RCA}_n\cap \bf At.$$ 
In this paper we delve into a new notion, that of {\it degrees of representability}. Not all algebras are representable in the same way or strength. 
If $\C\subseteq \Nr_n\D$, with $\D\in \CA_m$ for some ordinal (possibly infinite) $m$,  we say that $\D$ is an $m$-dilation of $\C$ or simply a dilation if $m$ is clear from context. 
Using this jargon of 'dilating algebras' we  say that $\A\in {\sf RCA}_n$ is {\it strongly representable up to $m>n$}$\iff$ $\Cm\At\A$ admits an $m$- dilation equivalently $\Cm\At\A\in \bold S\Nr_n\CA_m$.-
This means that, though $\A$ itself is in $\RCA_n$,  the \de\ completion of $\A$ is not representable, but nevertheless it has some neat embedding property; it is `close' to bieng representable. 
The bigger the dimension of the dilation of the representable algebra, {\it the more representable} the algebra is, 
the closer it is to being strongly representable. The representability of an atomic algebra does not force its \de\ completion to be representable too, if it does then this algebra is strongly representable. 
A compelling question in this context is that if we let  $\langle \bold K_m: 2<n<m\leq \omega\rangle$ be the sequence whose $m$th entry $\bold K_m$ is the class of algebras 
that are strongly representable up to  $m$, 
it is obvious that this is a decreasing sequence, but is it stictly decreasing? In other words, are there $2<n<l<j\leq \omega$ such that $\bold K_l=\bold K_j?$ This question is far from being trivial,
and will be answered below. Through the unfolding of this paper, we will investigate and make precise the notion of an algebra being more representable than another.

\section{Preliminaries}
We follow the notation of \cite{1} which is in conformity with the notation in the monograph 
\cite{HMT2}. 
\begin{definition} 
Assume that $\alpha<\beta$ are ordinals and that 
$\B\in \CA_{\beta}$. Then the {\it $\alpha$--neat reduct} of $\B$, in symbols
$\mathfrak{Nr}_{\alpha}\B$, is the
algebra obtained from $\B$, by discarding
cylindrifiers and diagonal elements whose indices are in $\beta\setminus \alpha$, and restricting the universe to
the set $Nr_{\alpha}B=\{x\in \B: \{i\in \beta: {\sf c}_ix\neq x\}\subseteq \alpha\}.$
\end{definition}
It is straightforward to check that $\mathfrak{Nr}_{\alpha}\B\in \CA_{\alpha}$. 
Let $\alpha<\beta$ be ordinals. If $\A\in \CA_\alpha$ and $\A\subseteq \mathfrak{Nr}_\alpha\B$, with $\B\in \CA_\beta$, then we say that $\A$ {\it neatly embeds} in $\B$, and 
that $\B$ is a {\it $\beta$--dilation of $\A$}, or simply a {\it dilation} of $\A$ if $\beta$ is clear 
from context. For $\bold K\subseteq \CA_{\beta}$, 
we write $\Nr_{\alpha}\bold K$ for the class $\{\mathfrak{Nr}_{\alpha}\B: \B\in \bold K\}.$

Following \cite{HMT2}, ${\sf Cs}_n$ denotes the class of {\it cylindric set algebras of dimension $n$}, and ${\sf Gs}_n$ 
denotes the class of {\it generalized cylindric set algebra of dimension $n$}; $\C\in {\sf Gs}_n$, if $\C$ has top element
$V$ a disjoint union of cartesian squares,  that is $V=\bigcup_{i\in I}{}^nU_i$, $I$ is a non-empty indexing set, $U_i\neq \emptyset$  
and  $U_i\cap U_j=\emptyset$  for all $i\neq j$. The operations of $\C$ are defined like in cylindric set algebras of dimension $n$ 
relativized to $V$. It is known that $\bold I{\sf Gs}_n={\sf RCA}_n=\bold S\Nr_n\CA_{\omega}=\bigcap_{k\in \omega}\bold S\Nr_n\CA_{n+k}$. 
We often identify set algebras with their domain referring to an injection $f;\A\to \wp(V)$ ($\A\in \CA_n$) as a complete representation  of $\A$ (via $f$) 
where $V$  is a ${\sf Gs}_n$ unit.
\begin{definition} An algebra $\A\in {\sf CA}_n$ is {\it completely representable} $\iff$ there exists $\C\in {\sf Gs}_n$, and an isomorphism $f:\A\to \C$ such that for all $X\subseteq \A$, 
$f(\sum X)=\bigcup_{x\in X}f(x)$, whenever $\sum X$ exists in $\A$. In this case, we say that $\A$ is {\it completely representable via $f$.}
\end{definition}
It is known that $\A$ is completely representable via $f:\A\to \C$, where $\C\in {\sf Gs}_n$ has top element $V$ say 
$\iff$ $\A$ is atomic and $f$ is {\it atomic} in the sense that 
$f(\sum \At\A)=\bigcup_{x\in \At\A}f(x)=V$ \cite{HH}. We denote the class of completely representable $\CA_n$s by $\CRCA_n$.

To define certain deterministic games to be used in the sequel,  
we recall the notions of {\it atomic networks} and {\it atomic games} \cite{HHbook, HHbook2}. 
Let $i<n$. For $n$--ary sequences $\bar{x}$ and $\bar{y}$ $\iff \bar{y}(j)=\bar{x}(j)$ for all $j\neq i.$  

\begin{definition}\label{game} Fix finite $n>2$ and assume that $\A\in \CA_n$ is atomic.

(1) An {\it $n$--dimensional atomic network} on $\A$ is a map $N: {}^n\Delta\to  At\A$, where
$\Delta$ is a non--empty set of {\it nodes}, denoted by $\nodes(N)$, satisfying the following consistency conditions for all $i<j<n$: 
\begin{itemize}
\item If $\bar{x}\in {}^n\nodes(N)$  then $N(\bar{x})\leq {\sf d}_{ij}\iff x_i=x_j$,
\item If $\bar{x}, \bar{y}\in {}^n\nodes(N)$, $i<n$ and $\bar{x}\equiv_i \bar{y}$, then  $N(\bar{x})\leq {\sf c}_iN(\bar{y})$.
\end{itemize}
For $n$--dimensional atomic networks $M$ and $N$,  we write $M\equiv_i N\iff M(\bar{y})=N(\bar{y})$ for all $\bar{y}\in {}^{n}(n\sim \{i\})$.

(2)    Assume that $m, k\leq \omega$. 
The {\it atomic game $G^m_k(\At\A)$, or simply $G^m_k$}, is the game played on atomic networks
of $\A$ using $m$ nodes and having $k$ rounds \cite[Definition 3.3.2]{HHbook2}, where
\pa\ is offered only one move, namely, {\it a cylindrifier move}: 
Suppose that we are at round $t>0$. Then \pa\ picks a previously played network $N_t$ $(\nodes(N_t)\subseteq m$), 
$i<n,$ $a\in \At\A$, $x\in {}^n\nodes(N_t)$, such that $N_t(\bar{x})\leq {\sf c}_ia$. For her response, \pe\ has to deliver a network $M$
such that $\nodes(M)\subseteq m$,  $M\equiv _i N$, and there is $\bar{y}\in {}^n\nodes(M)$
that satisfies $\bar{y}\equiv _i \bar{x}$ and $M(\bar{y})=a$.  
We write $G_k(\At\A)$, or simply $G_k$, for $G_k^m(\At\A)$ if $m\geq \omega$.
\end{definition}

\subsection{Clique guarded semantics}
Fix $2<n<\omega$,  
We study three approaches to approximating the class $\sf RCA_n$  by (a) basis,  (b) existence of 
dilations and finally (c) (locally well--behaved) relativized representations, in analogy to the relation algebra case dealt with in 
\cite[Chapter 13]{HHbook}. Examples include $m$--flat and $m$--square representations, where $2<n<m<\omega$. It will always be the case, unless otherwise explicitly indicated,
that $1<n<m<\omega$;  $n$ denotes the dimension. 
But first we recall certain relativized set algebras. A set $V$ ($\subseteq {}^nU$)  is {\it diagonizable} if  
$s\in V\implies s\circ [i|j]\in V$.
We say that $V\subseteq {}^nU$ is {\it locally square} if whenever $s\in V$ and
$\tau: n\to n$, then $s\circ \tau\in V$. 
Let ${\sf D}_n$ (${\sf G}_n$) be the class of set 
algebras whose top elements are diagonizable (locally square) and operations are defined like cylindric set algebra 
of dimension $n$ relativized to the top element $V$ . 
We identify notationally a 
set algebra with its universe.  Let $\Mo$ be a {\it relativized representation} of $\A\in \CA_n$, that is, there exists an injective
homomorphism $f:\A\to \wp(V)$ where $V\subseteq {}^n\Mo$ and $\bigcup_{s\in V} \rng(s)=\Mo$. For $s\in V$ and $a\in \A$,
we may write $a(s)$ for $s\in f(a)$. This notation does not refer to $f$, but whenever used 
then  either $f$ will be clear from context, or immaterial in the context. We may also write $1^{\Mo}$ for $V$.  
Let  $\L(\A)^m$ be the first order signature using $m$ variables
and one $n$--ary relation symbol for each element of $\A$.  Allowing infinitary conjunctions, we denote the resulting signature taken in $L_{\infty, \omega}$
by $\L(\A)_{\infty, \omega}^m$.

{\it An $n$--clique}, or simply a clique,  is a set $C\subseteq \Mo$ such
$(a_0,\ldots, a_{n-1})\in V=1^{\Mo}$
for all distinct $a_0, \ldots, a_{n-1}\in C.$
Let
$${\sf C}^m(\Mo)=\{s\in {}^m\Mo :\rng(s) \text { is an $n$ clique}\}.$$
Then ${\sf C}^m(\Mo)$ is called the {\it $n$--Gaifman hypergraph}, or simply Gaifman hypergraph  of $\Mo$, with the $n$--hyperedge relation $1^{\Mo}$.
The {\it $n$-clique--guarded semantics}, or simply clique--guarded semantics,  $\models_c$, are defined inductively. 
Let $f$ be as above. For an atomic $n$--ary formula $a\in \A$, $i\in{}^nm$, 
and $s\in {}^m\Mo$, $\Mo, s\models_c a(x_{i_0},\ldots x_{i_{n-1}})\iff\ (s_{i_0}, \ldots s_{i_{n-1}})\in f(a).$ 
For equality, given $i<j<m$, $\Mo, s\models_c x_i=x_j\iff s_i=s_j.$
Boolean connectives, and infinitary disjunctions,
are defined as expected.  Semantics for existential quantifiers
(cylindrifiers) are defined inductively for $\phi\in \L(A)^m_{\infty, \omega}$ as follows:
For $i<m$ and $s\in {}^m\Mo$, $\Mo, s\models_c \exists x_i\phi \iff$ there is a $t\in {\sf C}^m(\Mo)$, $t\equiv_i s$ such that 
$\Mo, t\models_c \phi$. 
\begin{definition}\label{cl}
Let $\A\in \CA_n$, $\Mo$ a relativized representation of $\A$ and $\L(\A)^m$  be as above. 
\begin{enumarab}
\item Then $\Mo$ is said to be {\it $m$--square},
if witnesses for cylindrifiers can be found on $n$--cliques. More precisely,
for all  $\bar{s}\in {\sf C}^m(\Mo), a\in \A$, $i<n$,
and for any injective map  $l:n\to m$, if $\Mo\models {\sf c}_ia(s_{l(0)}\ldots, s_{l(n-1)})$,
then there exists $\bar{t}\in {\sf C}^m(\Mo)$ with $\bar{t}\equiv _i \bar{s}$,
and $\Mo\models a(t_{l(0)},\ldots, t_{l(n-1)})$.

\item $\Mo$ is said to be {\it (infinitary) $m$--flat} if  it is $m$--square and
for all $\phi\in (\L(A)_{\infty, \omega}^m) \L(\A)^m$, 
for all $\bar{s}\in {\sf C}^m(\Mo)$, for all distinct $i,j<m$,
we have
$\Mo\models_c [\exists x_i\exists x_j\phi\longleftrightarrow \exists x_j\exists x_i\phi](\bar{s}).$
\end{enumarab}
\end{definition}
We also need the notion of $m$--dimensional hyperbasis. This hyperbasis is made up of $m$--dimensional hypernetworks.
An $m$--dimensional hypernetwork on the atomic algebra $\A$ is an $n$--dimensional  network $N$, with $\nodes(N)\subseteq m$, endowed with a set of labels $\Lambda$ for
hyperedges of length $\leq m$,
not equal to $n$ (the dimension), such that $\Lambda\cap \At\A=\emptyset$. We call a label in $\Lambda$ a non-atomic label.
Like in networks, $n$--hyperedges are labelled by atoms. In addition to the consistency properties for networks,
an $m$--dimensional hypernetwork should satisfy the following additional consistency rule involving non--atomic labels:
If $\bar{x}, \bar{y}\in {}^{\leq m}m$, $|\bar{x}|=|\bar{y}|\neq n$ and $\exists \bar{z}$, such that $\forall i<|\bar{x}|$,
$N(x_i,y_i,\bar{z})\leq {\sf d}_{01}$,
then $N(\bar{x})=N(\bar{y})\in \Lambda$. 

\begin{definition} Let $2<n<m<\omega$ and $\A\in \CA_n$ be atomic.

(1) An $m$--dimensional basis $B$ for $\A$ consists of a set of $n$--dimensional networks whose nodes $\subseteq m$, satisfying 
the following properties: 

\begin{itemize}

\item For all  $a\in \At\A$, there is an $N\in B$ such that $N(0,1,\ldots, n-1)=a,$

\item The {\it cylindrifier property}: For all $N\in B$, all $i<n$,  all $\bar{x}\in {}^n\nodes(N)(\subseteq {}^nm)$, all $a\in\At\A$, such that
$N(\bar{x})\leq {\sf c}_ia$,  there exists $M\in B$, $M\equiv_i N$, $\bar{y}\in {}^n\nodes(M)$ such 
that $\bar{y}\equiv_i\bar{x}$ and $M(\bar{y})=a.$  We can always assume that $\bar{y}_i$ is a new node else one takes $M=N$. 

\end{itemize}

(2)  An $m$--dimensional  hyperbasis $H$ consists of $m$--dimensional hypernetworks, satisfying the above two conditions reformulated 
the obvious way for hypernetworks, in addition,  $H$ has an amalgamation property for overlapping hypernertworks; 
this property corresponds to commutativity of cylindrifiers: 

For all $M,N\in H$ and $x,y<m$, with $M\equiv_{xy}N$, there is $L\in H$ such that
$M\equiv_xL\equiv_yN$. Here $M\equiv_SN$, means 
that $M$ and $N$ agree off of $S$ \cite[Definition 12.11]{HHbook}.
\end{definition}

\begin{definition}\label{sub} Let $m$ be a finite ordinal $>0$. An $\sf s$ word is a finite string of substitutions $({\sf s}_i^j)$ $(i, j<m)$,
a $\sf c$ word is a finite string of cylindrifications $({\sf c}_i), i<m$;
an $\sf sc$ word $w$, is a finite string of both, namely, of substitutions and cylindrifications.
An $\sf sc$ word
induces a partial map $\hat{w}:m\to m$:
\begin{itemize}

\item $\hat{\epsilon}=Id,$

\item $\widehat{w_j^i}=\hat{w}\circ [i|j],$

\item $\widehat{w{\sf c}_i}= \hat{w}\upharpoonright(m\smallsetminus \{i\}).$

\end{itemize}
If $\bar a\in {}^{<m-1}m$, we write ${\sf s}_{\bar a}$, or
${\sf s}_{a_0\ldots a_{k-1}}$, where $k=|\bar a|$,
for an  arbitrary chosen $\sf sc$ word $w$
such that $\hat{w}=\bar a.$
Such a $w$  exists by \cite[Definition~5.23 ~Lemma 13.29]{HHbook}.
\end{definition}
The proof of the  following lemma can be distilled
from its $\sf RA$ analogue \cite[Theorem 13.20]{HHbook},  by reformulating deep concepts
originally introduced by Hirsch and Hodkinson for $\sf RA$s in the $\CA$ context, involving the notions of 
hypernetworks and hyperbasis. This can (and will) be done.
In the coming proof, we highlight
the main ideas needed to perform such a transfer from $\sf RA$s to $\CA$s
\cite[Definitions 12.1, 12.9, 12.10, 12.25, Propositions 12.25, 12.27]{HHbook}. 
In all cases, the $m$--dimensional dilation stipulated in the statement of the theorem, will have
top element ${\sf C}^m(\Mo)$, where $\Mo$ is the $m$--relativized representation of the given algebra, and the operations of the dilation
are induced by the $n$-clique--guarded semantics. For a class $\sf K$ of $\sf BAO$s, $\sf K\cap \bf At$ denotes the class of atomic algebras in $\sf K$.
\begin{lemma}\label{flat}\cite{atom} \cite[Theorems 13.45, 13.36]{HHbook}.
Assume that $2<n<m<\omega$ and let $\A\in \CA_n$. Then $\A\in \bold S\Nr_n\CA_m\iff \A$ has an  infinitary $m$--flat representation
$\iff \A$ has an $m$--flat representation. Furthermore, 
if $\A$ is atomic, then $\A$ has a complete infinitary $m$--flat representation $\iff$ $\A\in \bold S_c\Nr_n(\CA_m\cap \bf At)$.
We  can replace infinitary $m$-flat and $\CA_m$ by $m$-square and ${\sf D}_m$, respectively.
\end{lemma}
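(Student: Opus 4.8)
\begin{demo}{Proof sketch}
The plan is to route all three claims through a single construction that converts back and forth between $m$--dimensional dilations and $m$--relativized representations, pivoting on the combinatorial device of an $m$--dimensional (hyper)basis; this is the $\CA_n$ transcription of the relation algebra argument in \cite[Theorem 13.20]{HHbook}, the only structural change being that networks now label $n$--tuples of nodes by atoms rather than labelling edges. Concretely I would prove, for the first biconditional,
$$\A\in \bold S\Nr_n\CA_m \ \Longrightarrow\ \A \text{ has an } m\text{--flat representation} \ \Longrightarrow\ \A\in \bold S\Nr_n\CA_m,$$
and then obtain the infinitary and the atomic refinements by enriching the signature and by imposing atomicity on the dilation respectively, and the square/${\sf D}_m$ version by discarding the commutativity demand throughout.

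For the first implication, suppose $\A\subseteq \Nr_n\D$ with $\D\in \CA_m$. I would build $\Mo$ by a step--by--step construction whose approximations are $m$--dimensional networks over $\D$: to such a network $N$ on nodes $\subseteq m$ one attaches the ``network element'' $\prod_{\bar x}\s_{\bar x}N(\bar x)\in \D$ assembled from the $\sf sc$--words of Definition \ref{sub}, and one only admits $N$ when this element is nonzero. Pending cylindrifier requirements are discharged using $\cyl{i}$ in $\D$, which genuinely supplies a witness on one of the $m-n$ spare coordinates, so the resulting base $\Mo$ with top $V=1^{\Mo}$ and injection $f\colon \A\to \wp(V)$ satisfies the clique--guarded witness condition of Definition \ref{cl}, i.e.\ it is $m$--square. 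The upgrade to $m$--flat comes from the amalgamation property of overlapping hypernetworks, which is exactly the image at the level of hyperlabels of the law $\cyl{i}\cyl{j}=\cyl{j}\cyl{i}$ valid in the $\CA_m$ dilation $\D$; this forces $\Mo\models_c[\exists x_i\exists x_j\phi\longleftrightarrow\exists x_j\exists x_i\phi](\bar{s})$ on every clique.

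For the converse I would reconstruct a dilation from an $m$--flat $\Mo$. Carrying the set ${\sf C}^m(\Mo)$ with operations induced by the $n$--clique--guarded semantics $\models_c$, the family $\{\phi^{\Mo}:\phi\in \L(\A)^m\}$ forms an algebra $\D$; flatness is precisely what is needed to verify the $\CA_m$ axioms in $\D$ (commutativity of cylindrifiers being the delicate one), and $a\mapsto a^{\Mo}$ embeds $\A$ into $\Nr_n\D$, so $\A\in \bold S\Nr_n\CA_m$. Running the same two constructions against the larger signature $\L(\A)^m_{\infty,\omega}$ yields the equivalence with \emph{infinitary} $m$--flatness. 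For the atomic clause one refines the hyperbasis so that it is assembled from a \ws\ for \pe\ in the atomic hypernetwork game built on the game $G^m_k$ of Definition \ref{game}: this makes the reconstructed dilation atomic and the representation complete (equivalently, $f$ atomic in the sense $f(\sum\At\A)=V$), giving $\A\in \bold S_c\Nr_n(\CA_m\cap \bf At)$ and conversely.

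Finally, the $m$--square/${\sf D}_m$ statement is obtained by deleting the commutativity requirement everywhere: an $m$--square but possibly non--flat $\Mo$ corresponds to an $m$--dimensional \emph{basis} lacking the amalgamation property, and the clique--guarded reconstruction then only certifies the weaker axioms defining ${\sf D}_m$, embedding $\A$ into a dilation in ${\sf D}_m$ rather than in $\CA_m$. The step I expect to be the main obstacle is the faithful transfer of the Hirsch--Hodkinson hypernetwork calculus from $\sf RA$ to $\CA_n$ --- in particular, pinning down that the amalgamation property of a hyperbasis is \emph{equivalent} to commutativity of cylindrifiers in the dilation, and that the step--by--step limit stays $m$--flat for all of $\L(\A)^m_{\infty,\omega}$ and not merely for first--order formulas, which requires the hyperlabels to record enough higher--dimensional coherence to block \pa\ in the infinitary game.
\end{demo}
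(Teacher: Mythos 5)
Your overall architecture does coincide with the paper's: the representation-to-dilation direction via the algebra of clique-guarded definable sets $\{\phi^{\Mo}:\phi\in \L(\A)^m\}$ with $r\mapsto r(\bar{x})^{\Mo}$ giving a neat embedding, the infinitary refinement via $\L(\A)^m_{\infty,\omega}$, and the square/${\sf D}_m$ variant obtained by discarding commutativity. Those parts of your sketch are sound and are essentially the paper's proof.

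The gap is in the dilation-to-representation direction. Your step-by-step construction manipulates networks over $\D$ whose $n$-edges carry atoms (that is what a network is by Definition \ref{game}, and it is what your admission criterion via the element $\prod_{\bar{x}}\s_{\bar{x}}N(\bar{x})$ presupposes), but nothing in the hypothesis $\A\in \bold S\Nr_n\CA_m$ makes $\A$ or $\D$ atomic: the first biconditional is asserted for arbitrary $\A\in \CA_n$. Admitting networks merely because their network element is nonzero does not repair this: in the limit structure, every tuple of $1^{\Mo}$ must decide, for \emph{every} $r\in \A$, whether it lies in $f(r)$, i.e.\ each tuple must carry an ultrafilter of $\A$; nonzero-element labels yield only filter bases, and completing them coherently across all tuples and all rounds is exactly the problem (an enumeration argument would need countability, which the lemma does not assume). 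The paper resolves this by passing to canonical extensions: from $\A\subseteq \Nrr_n\D$ one gets $\A^+\subseteq_c \Nrr_n\D^+$ with $\D^+$ \emph{atomic}, the $m$-dimensional hyperbasis is built from the atoms of $\D^+$, with the non-atomic hyperlabels drawn from $\bigcup_{k<m-1}\At\Nrr_k\D^+$ (whose atomicity again needs $\D^+$, not $\D$), and the step-by-step representation built from this hyperbasis represents $\A^+$, hence restricts to an $m$-flat representation of $\A$. The same omission undercuts your flatness upgrade: amalgamation is a property of the \emph{hyperbasis}, and the hyperlabels that make it meaningful are exactly those atoms of the neat reducts of $\D^+$; without the canonical extension there is no supply of such labels. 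Finally, your game-theoretic treatment of the atomic clause differs from the paper, which instead exhibits atoms of the dilation directly by the infinitary conjunction $\tau=\bigwedge\{\psi\in \L(\A)^m_{\infty,\omega}: \Mo\models_c\psi(\bar{a})\}$; your route could likely be made to work, but the canonical-extension step is indispensable and is the concrete missing idea in your proposal.
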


\begin{proof} We give a sketchy sample. More details can be found in \cite{atom}.
We start from representations to  dilations.
Let $\Mo$ be an $m$--flat representation of $\A$. 
For $\phi\in \L(\A)^m$, 
let $\phi^{\Mo}=\{\bar{a}\in {\sf C}^m(\Mo):\Mo\models_c \phi(\bar{a})\}$, where ${\sf C}^m(\Mo)$ is the $n$--Gaifman hypergraph.
Let $\D$ be the algebra with universe $\{\phi^{M}: \phi\in \L(\A)^m\}$ and with  cylindric
operations induced by the $n$-clique--guarded (flat) semantics. 
For $r\in \A$, and $\bar{x}\in {\sf C}^m(\Mo)$, we identify $r$ with the formula it defines in $\L(\A)^m$, and 
we write $r(\bar{x})^{\Mo}\iff \Mo, \bar{x}\models_c r$.
Then $\D$ is a set algebra 
with domain $\wp({\sf C}^m(\Mo))$ and with unit $1^{\D}={\sf C}^m(\Mo)$.
Since $\Mo$ is $m$--flat, then cylindrifiers in $\D$ commute, and so $\D\in \CA_m$.
Now define $\theta:\A\to \D$, via $r\mapsto r(\bar{x})^{\Mo}$. Then exactly like in the proof of \cite[Theorem 13.20]{HHbook},
$\theta$ is an injective neat embedding, that is, $\theta(\A)\subseteq \mathfrak{Nr}_n\D$.
 The relativized model $\Mo$ itself might not be  infinitary $m$--flat, but one can build an infinitary $m$--flat representation of $\A$, whose base $\Mo$ is an $\omega$--saturated model
of the consistent first order theory, stipulating the existence of an $m$--flat representation, cf. \cite[Proposition 13.17, Theorem 13.46 items (6) and (7)]{HHbook}.

The inverse implication from dilations to representations harder. One constructs from the given 
$m$--dilation, an $m$--dimensional 
hyperbasis (that can be defined similarly to the $\RA$ case, cf. \cite[Definition 12.11]{HHbook}) 
from which
the required $m$-relativized representation is built.  
This can be done in a step--by step manner treating the hyperbasis 
as a `saturated set of mosaics', cf. \cite[Proposition 13.37]{HHbook}..
We show how an $m$--dimensional hyperbasis for the  canonical extension of $\A\in \CA_n$
is obtained from an $m$--dilation of $\A$ \cite[Definition 13.22, lemmata 13.33-34-35, Proposition 36]{HHbook}.
Suppose that $\A\subseteq \Nrr_n\D$ for some $\D\in \CA_m$.
Then $\A^+\subseteq_c \Nrr_m\D^+$, and $\D^+$ is atomic. We show that $\D^+$ has an $m$--dimensional hyperbasis.
First, it is not hard to see that for every $n\leq l\leq m$, $\Nrr_l\D^+$ is atomic.
The set of non--atomic labels $\Lambda$ is the set $\bigcup_{k<m-1}\At\Nrr_k\D^+$.
For each atom $a$ of $\D^+$, define a labelled  hypergraph $N_a$ as follows.
Let $\bar{b}\in {}^{\leq m}m$. Then if $|\bar{b}|=n$,  so that $\bar{b}$  has to get a label that is an atom of $\D^+$, one sets  $N_a(\bar{b})$ to be 
the unique $r\in \At\D^+$ such that $a\leq {\sf s}_{\bar{b}}r$; notation here
is given in definition \ref{sub}.
If $n\neq |\bar{b}| <m-1$, $N_a(\bar{b})$ is the unique atom $r\in \Nrr_{|b|}\D^+$ such that $a\leq {\sf s}_{\bar{b}}r.$ Since
$\Nrr_{|b|}\D^+$ is atomic, this is well defined. Note that this label may be a non--atomic one; it 
might not be an atom of $\D^+$. But by definition it is a permitted label.
Now fix $\lambda\in \Lambda$. The rest of the labelling is defined by $N_a(\bar{b})=\lambda$.
Then $N_a$ as an $m$--dimensional
hypernetwork, for each 
such chosen $a$,  and $\{N_a: a\in \At\D^+\}$ is the required $m$--dimensional hyperbasis.
The rest of the proof consists of a fairly straightforward adaptation of the proof \cite[Proposition 13.37]{HHbook},
replacing edges by $n$--hyperedges.

For results on {\it complete} $m$--flat representations, one works in $L_{\infty, \omega}^m$ instead of first order logic. 
With $\D$ formed like above from (the complete $m$--flat representation) $\Mo$, using $\L(\A)_{\infty,\omega}^m$ instead of $L_n$, 
let $\phi^{\Mo}$ be a non--zero element in $\D$.
Choose $\bar{a}\in \phi^{\Mo}$, and let $\tau=\bigwedge \{\psi\in \L(\A)_{\infty,\omega}^m: \Mo\models_c \psi(\bar{a})\}.$
Then $\tau\in \L(\A)_{\infty,\omega}^m$, and $\tau^{\Mo}$ is an atom below $\phi^{\Mo}$. 
The rest is entirely analogous, cf. \cite[p.411]{HHbook}. 

\end{proof}

The following lemma  
is proved in \cite[Lemma 5.8]{mlq}
\begin{lemma}\label{n}Let $2<n<m$. 
\item If $\A\in \CA_n$ is finite and \pa\ has a \ws\ in $G^m_{\omega}(\At\A),$ then $\A$  does not have an $m$--square representation. 
\end{lemma}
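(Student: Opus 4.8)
The plan is to argue contrapositively at the level of the game: I will show that if $\A$ possesses an $m$--square representation, then it is \pe\ (not \pa) who has a winning strategy in $G^m_\omega(\At\A)$. This is incompatible with the hypothesis, because in any two--player game of perfect information in which every play is won by exactly one of the players, both players cannot have winning strategies at once — running \pa's purported strategy against \pe's purported strategy produces a single play that each of them wins, which is absurd. Note that this last step needs no appeal to determinacy; it is just the observation that \pa's strategy forces \pe\ to fail at some finite round while \pe's strategy forbids that on the very same play.

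To manufacture \pe's strategy from an $m$--square representation $\Mo$, given by an injection $f:\A\to\wp(V)$ with $V\subseteq {}^n\Mo$, \pe\ maintains together with each network $N_t$ she plays a map $\pi_t:\nodes(N_t)\to\Mo$ \emph{realizing} $N_t$ in $\Mo$, meaning that for every $\bar z\in {}^n\nodes(N_t)$ the tuple $(\pi_t(z_0),\dots,\pi_t(z_{n-1}))$ lies in $V$ and in $f(N_t(\bar z))$. The labels of $N_t$ are read off from $\Mo$ through $\pi_t$, and here finiteness of $\A$ is used in an essential way: then $\A$ is atomic with $1=\sum\At\A$ a finite join, so $f(1)=V=\bigcup_{b\in\At\A}f(b)$ while the sets $f(b)$ $(b\in\At\A)$ are pairwise disjoint; hence every edge of the clique structure of $\Mo$ carries a unique atom label, and the induced labelling is well defined and valued in $\At\A$, exactly as a network demands.

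In the opening round \pe\ answers \pa's chosen atom $a$ by placing it on $n$ nodes realized by some tuple of $f(a)$, which is nonempty since $a\neq 0$ and $f$ is injective. At a later round \pa\ presents a previously played $N=N_s$, an index $i<n$, an atom $a$ and $\bar x\in {}^n\nodes(N)$ with $N(\bar x)\leq \cyl{i}a$. Then $\bar c:=(\pi_s(x_0),\dots,\pi_s(x_{n-1}))\in f(N(\bar x))\subseteq f(\cyl{i}a)$, so $\bar c$ is an $n$--clique on which $\cyl{i}a$ holds in the clique--guarded semantics. Viewing $\bar c$ as the relevant coordinates of some $\bar s\in {\sf C}^m(\Mo)$ through an injection $l:n\to m$ (the budget of $m$ nodes leaving enough room, and the reuse variant of the game allowing nodes to be recycled), $m$--squareness supplies $\bar t\equiv_i\bar s$ in ${\sf C}^m(\Mo)$ with $\Mo\models a(t_{l(0)},\dots,t_{l(n-1)})$. \pe\ takes the altered point $t_{l(i)}$ as the image of the (possibly new) node $y_i$, updates $\pi$, and lets $M\equiv_i N$ be the network read off from the extended map; then $\bar y\equiv_i\bar x$, $M(\bar y)=a$, and $\pi$ still realizes $M$. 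Thus \pe\ never gets stuck and survives all $\omega$ rounds.

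The step I expect to be the main obstacle is checking that the network $M$ read off from $\Mo$ is genuinely \emph{consistent}, most delicately the diagonal clause $M(\bar z)\leq\diag{i}{j}\iff z_i=z_j$: its forward direction requires the realizing map to separate distinct nodes, so \pe\ must pick her witnesses so as to keep $\pi$ injective on nodes — this is precisely where the clique--based (square, rather than merely flat) witness selection has to be handled with care — whereas the cylindrifier consistency clause drops out of $f$ being a homomorphism. The remaining bookkeeping, namely matching the coordinate $i<n$ of the move with the coordinate $l(i)<m$ that squareness changes and respecting the $m$--node budget by reusing nodes, is routine once this consistency is in place.
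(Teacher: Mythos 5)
Your proof is correct and is essentially the intended argument: the paper itself does not prove this lemma but imports it from \cite[Lemma 5.8]{mlq}, and the proof is the same contrapositive you give — since $\A$ is finite, an $m$-square representation is automatically atomic (the unit is the disjoint union $\bigcup_{b\in \At\A}f(b)$, so every tuple of the unit satisfies a unique atom), and it equips \pe\ with a winning strategy in $G^m_{\omega}(\At\A)$ by always playing networks realized inside the representation, which cannot coexist with a winning strategy for \pa. The obstacle you flag is routine to close: the unit $V$ is closed under the substitutions $s\mapsto s\circ [i|j]$ (because $f({\sf c}_i{\sf d}_{ij})=f(1)=V$), so every $n$-tuple over a clique, repetitions included, lies in $V$ and carries a unique atom label; and injectivity of the realizing map $\pi$ requires no careful witness selection at all — if the squareness witness happens to equal $\pi(z)$ for a node $z$ already present, \pe\ simply reuses $z$ (playing $M=N$ and $\bar{y}=\bar{x}[i\mapsto z]$), the requirement $M(\bar{y})=a$ holding automatically because labels are read off $\Mo$ and $\pi(\bar{y})$ is exactly the witness tuple lying in $f(a)$.
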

In our next proof we use a {\it rainbow constructions}; in this we follow \cite{HH, HHbook2}.
Fix $2<n<\omega$. Given relational structures 
$\sf G$ (the greens) and $\sf R$ (the reds) the rainbow 
atom structure of a $\CA_n$  consists of equivalence classes of surjective maps $a:n\to \Delta$, where $\Delta$ is a coloured graph.
A {\it coloured graph} is a complete graph labelled by the rainbow colours, the greens $\g\in \sf G$,  reds $\r\in \sf R$, and whites; and some $n-1$ tuples are labelled by `shades of yellow'.
In coloured graphs certain triangles are not allowed for example all green triangles are forbidden.
A red triple $(\r_{ij}, \r_{j'k'}, \r_{i^*k^*})$ $i,j, j', k', i^*, k^*\in \sf R$ is not allowed,  unless $i=i^*,\; j=j'\mbox{ and }k'=k^*$, 
in which case we say that the  red indices match, cf.\cite[4.3.3]{HH}.
The equivalence relation relates two such maps $\iff$  they essentially define the same graph \cite[4.3.4]{HH}. We let $[a]$ denote the equivalence class containing $a$. 
For $2<n<\omega$, we use the graph version of the usual atomic $\omega$--rounded game $G_{\omega}^m(\alpha)$ with $m$ nodes, played on atomic networks
of the $\CA_n$ atom  structure $\alpha$. 
The game  $\bold G^m(\beta)$ where $\beta$ is a $\CA_n$ 
atom structure is like $G_{\omega}^m(\At\A)$ except that \pa\ has the option to reuse the $m$ nodes in play. We use the `graph versions' of these games,
cf. \cite[4.3.3]{HH}.  The (complex) rainbow algebra based on $\sf G$ and $\sf R$ is denoted by $\A_{\sf G, R}$.
The dimension $n$ will always be clear from context.

\section{Degrees of representability}

We let  $\bold S_c$ denotes the operation of forming complete sublgebras and $\bold S_d$ denotes the operation of forming dense subalgebras. We let 
$\bold I$ denote the operation of forming 
isomorphic images.  
For any class of $\sf BAO$s
$\bold I{\sf K}\subseteq \bold S_d{\sf K}\subseteq \bold S_c\sf K$. (It is not hard to show that for Boolean algebras the inclusion are proper).
\begin{definition}\label{complex} Let $2<n\leq l\leq m\leq \omega$. Let $\bold O\in \{\bold S, \bold S_d, \bold S_c, \bold I\}$.
\begin{enumarab}
\item An algebra $\A\in \CA_n$ has the {\it $\bold O$ neat embedding property up to $m$} if $\A\in \bold O\Nr_n\CA_m$. If $m=\omega$ and $\bold O=\bold S$, 
we say simply that $\A$ has {\it the 
neat embedding property}. (Observe that the last condition is equivalent to that $\A\in \RCA_n$). 

\item An atomic algebra $\A\in \sf CA_n$ has the {\it complex $\bold O$ neat embedding property up to $m$}, if $\Cm\At\A\in \bold O\Nr_n\CA_m.$ 
The word `complex' here refers to the {\it involvement of the complex algebra} 
in the definition.

\item An atomic algebra $\A\in {\sf RCA}_n$ is {\it strongly representable up to $l$ and $m$} if $\A\in \Nr_n\CA_l$ 
and $\Cm\At\A\in \bold S\Nr_n\CA_m$. If $l=n$ and $m=\omega$, we say that $\A$ is 
{\it strongly representable}.
\end{enumarab}
\end{definition}  
\begin{theorem}\label{can}
Then is an atomic simple countable $\A\in \CA_n$ (i.e has the neat embedding property) 
but not the complex $\bold S$ neat embedding propery up to $m$ for any $m\geq n+3$.  
\end{theorem}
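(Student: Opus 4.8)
The plan is to realise $\A$ as a rainbow cylindric algebra. Fix two countable relational structures $\sf G$ (greens) and $\sf R$ (reds) and let $\At=\At\A_{\sf G,\sf R}$ be the associated $\CA_n$ rainbow atom structure, whose elements are the equivalence classes of surjections $a:n\to\Delta$ onto coloured graphs, as recalled just before Section~3. Take $\A=\Tm\At$, the term algebra (the subalgebra of $\Cm\At$ generated by the atoms); then $\A$ is automatically atomic, countable, and satisfies $\At\A=\At$ and $\Cm\At\A=\Cm\At$. Simplicity of $\A$ (and of $\Cm\At$) will follow from connectedness of the rainbow atom structure: every atom $a$ satisfies $\cyl0\cyl1\cdots\cyl{n-1}a=1$, so the algebra has no non-trivial proper ideals. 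It then remains to meet the two opposing requirements: (i) $\A=\Tm\At\in\RCA_n$ (equivalently $\A\in\bold S\Nr_n\CA_\omega$, i.e.\ $\A$ has the neat embedding property in the sense of Definition~\ref{complex}(1)); and (ii) $\Cm\At\notin\bold S\Nr_n\CA_{n+3}$, which since $\bold S\Nr_n\CA_m\subseteq\bold S\Nr_n\CA_{n+3}$ for $m\geq n+3$ yields $\Cm\At\notin\bold S\Nr_n\CA_m$ for every $m\geq n+3$, as required.

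For (i) I would extract a genuine ($\omega$-dimensional) representation of $\Tm\At$ from the Fra\"iss\'e limit $M$ of the class of finite coloured graphs underlying the construction. That class is set up to be an amalgamation class, so a countable homogeneous limit $M$ exists; mapping each atom $a$ to the (non-empty, pairwise disjoint) set of $n$-tuples of $M$ realising the colour $a$, and extending Boolean-ly, gives a map into $\wp({}^nM)$. One checks it is a homomorphism: the diagonal and consistency clauses of Definition~\ref{game} force the Boolean and diagonal conditions, while homogeneity of $M$ supplies witnesses for cylindrifiers. The crucial point is that the elements of $\Tm\At$ are only the finite and cofinite joins of atoms, and these are all realised inside the single homogeneous model $M$; the obstruction that destroys representability lives strictly in the infinite joins of $\Cm\At$ and never enters $\Tm\At$. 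Hence $\Tm\At\in\RCA_n$. (Alternatively one verifies $\Tm\At\in\bold S\Nr_n\CA_m$ for each finite $m$ by producing $m$-flat representations and invokes $\RCA_n=\bigcap_m\bold S\Nr_n\CA_m$.)

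For (ii) I would exhibit a winning strategy for \pa\ in the game $\bold G^{n+3}(\At)$, the atomic game on $n+3$ nodes in which \pa\ may reuse nodes. Taking $\sf R$ well-ordered (say $\sf R=\omega$) and $\sf G$ long enough, \pa\ plays the standard rainbow cone strategy: over the $\omega$ rounds he repeatedly plays green cones, reusing the $n+3$ nodes, whose apexes force \pe\ to label certain edges by reds; the matching rule on forbidden red triangles (\cite[4.3.3]{HH}) then compels \pe\ to answer with strictly decreasing red indices, producing an infinite descending sequence in $\sf R$, which is impossible. The bound $n+3$ arises from the bookkeeping: a cone over an $(n-1)$-tuple together with its apex uses $n$ nodes, and closing off the red contradiction needs reuse of the board up to a total of $n+3$ nodes. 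Finally I convert the \pa-win into the algebraic conclusion. Were $\Cm\At\in\bold S\Nr_n\CA_{n+3}$, then, \emph{because $\Cm\At$ is complete and atomic}, the neat embedding $\Cm\At\subseteq_c\Nr_n\D$ (with $\D\in\CA_{n+3}$) would yield, exactly as in the proof of Lemma~\ref{flat}, an $(n+3)$-dimensional hyperbasis for $\At$, from which \pe\ would win $\bold G^{n+3}(\At)$, contradicting \pa's strategy. This step genuinely uses completeness of $\Cm\At$: the non-complete embedding of $\Tm\At$ into a dilation yields no hyperbasis, which is precisely why (i) and (ii) are compatible even though both algebras share the atom structure $\At$.

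The main obstacle is the simultaneous calibration of $\sf G$ and $\sf R$ demanded by (i) and (ii): the structures must be rich enough that \pa's cone strategy wins on only $n+3$ nodes, yet tame enough that the Fra\"iss\'e limit still represents $\Tm\At$. Pinning down \pa's strategy so that it closes using exactly $n+3$ (and not more) nodes, i.e.\ accounting precisely for the cones, the node reuse, and the length of the forced descending red sequence, is the delicate combinatorial core; by contrast, the passage from the \pa-win to $\Cm\At\notin\bold S\Nr_n\CA_{n+3}$ through hyperbases, and the verification that homogeneity of $M$ represents $\Tm\At$, are then adaptations of the cited results of Hirsch--Hodkinson.
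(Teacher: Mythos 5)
Your overall architecture (a rainbow-style atom structure $\At$ whose term algebra is representable while its complex algebra fails the neat embedding property, detected by a game) matches the paper's, and your part (i) is essentially the paper's Hodkinson-style argument via the homogeneous model. But the step converting \pa's win in $\bold G^{n+3}(\At)$ into $\Cm\At\notin\bold S\Nr_n\CA_{n+3}$ contains a genuine gap: you claim that if $\Cm\At\in\bold S\Nr_n\CA_{n+3}$ then, \emph{because $\Cm\At$ is complete and atomic}, the embedding would be a complete one $\Cm\At\subseteq_c\Nr_n\D$, yielding a hyperbasis and a \ws\ for \pe. Completeness of the \emph{algebra} $\Cm\At$ does not make an arbitrary subalgebra embedding supremum-preserving: $\bold S\Nr_n\CA_{n+3}$ provides only a plain embedding, and the reuse game $\bold G^{n+3}$ detects precisely membership in $\bold S_c\Nr_n\CA_{n+3}$, which is strictly weaker than what Theorem \ref{can} asserts. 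The paper itself contains a counterexample to your implication: the rainbow algebra $\C$ based on $\Z$ and $\N$ in the proof of Theorem \ref{main2}(1) is exactly the algebra you propose to build (well-ordered reds, \pa\ forcing an infinite descending sequence in $\N$ by reusing the $n+3$ nodes); it is complete and atomic with $\C=\Cm\At\C$, \pa\ wins $\bold G^{n+3}(\At\C)$, and yet $\C\in\RCA_n\subseteq\bold S\Nr_n\CA_{n+3}$. So with your choice of parameters ($\sf R=\omega$, $\sf G$ long) the conclusion you want is actually false: that construction separates $\bold S_c\Nr_n\CA_{n+3}$ from $\RCA_n$, but it cannot push anything outside $\bold S\Nr_n\CA_{n+3}$.

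The missing device is the one that makes games applicable to \emph{ordinary} (not complete) square representations, namely finiteness. The paper takes the \emph{finite} rainbow algebra $\CA_{n+1,n}$ (greens based on $n+1$, reds based on $n$), where \pa\ wins $G^{n+3}_{\omega}$ (no node reuse needed) by lifting his \ws\ in the \ef\ game ${\sf EF}^{n+1}_{n+1}(n+1,n)$; since the algebra is finite, every representation is automatically atomic, so Lemma \ref{n} applies and $\CA_{n+1,n}$ has no $n+3$-square representation at all. The infinite atom structure $\At$ is then obtained by blowing up and blurring the reds of this finite algebra (splitting each red into $\omega$ many copies), which is what makes $\Tm\At$ representable via the model built with the extra shade of red $\rho$; and the finite algebra is embedded into $\Cm\At$ by the map $\Theta$ sending each atom to the sum of its copies --- this is the only place where completeness of $\Cm\At$ is genuinely used, to guarantee those sums exist. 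Since an $m$-square representation of $\Cm\At$ restricts to one of the finite subalgebra $\CA_{n+1,n}$, one concludes $\Cm\At\notin\bold S\Nr_n\CA_m$ for all $m\geq n+3$ without any completeness-of-embeddings issue. Without some such finite (or canonical-extension) intermediary, a game played on the infinite $\At$ cannot yield exclusion from the non-complete class $\bold S\Nr_n\CA_{n+3}$.
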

\begin{proof} We show that there is a countable atomic $\A\in \RCA_n$ 
such that $\Cm\At\A$ does not have an $n+3$--square representation.
This is proved in \cite{mlq} in the context of omituing types. Here we give a direct shorter more streamlined proof. The idea however is essentially the same. 
Take the finite rainbow cylindric algebra $R(\Gamma)$
as defined in \cite[Definition 3.6.9]{HHbook2},
where $\Gamma$ (the reds) is taken to be the complete irreflexive graph $m$, and the greens
are  $\{\g_i:1\leq i<n-1\}
\cup \{\g_0^{i}: 1\leq i\leq n+1\}$ so that $\sf G$ is the complete irreflexive graph $n+1$.

Call this finite rainbow $n$--dimensional cylindric algebra, based on ${\sf G}=n+1$ and ${\sf R}=n$,
$\CA_{n+1, n}$  and denote its finite atom structure by $\bf At_f$.
One  then replaces each  red colour
used in constructing  $\CA_{n+1, n}$ by infinitely many with superscripts from $\omega$, 
getting a weakly representable atom structure $\bf At$, that is,
the term algebra $\Tm\bf At$ is representable.
The resulting atom structure (with $\omega$--many reds),  call it $\bf At$, 
is the rainbow atom structure that is like the atom structure of the (atomic set) algebra denoted by $\A$ in \cite[Definition 4.1]{Hodkinson} except that we have $n+1$ greens
and not infinitely many as is the case in \cite{Hodkinson}.
Everything else is the same. In particular, the rainbow signature \cite[Definition 3.6.9]{HHbook2} now consists of $\g_i: 1\leq i<n-1$, $\g_0^i: 1\leq i\leq n+1$,
$\w_i: i<n-1$,  $\r_{kl}^t: k<l< n$, $t\in \omega$,
binary relations, and $n-1$ ary relations $\y_S$, $S\subseteq n+1$.
There is a shade of red $\rho$; the latter is a binary relation that is {\it outside the rainbow signature}.
But $\rho$ is used as a  label for  coloured graphs built during a `rainbow game', and in fact, \pe\ can win the rainbow $\omega$--rounded game
and she builds an $n$--homogeneous (coloured graph) model $M$ as indicated in the above outline by using $\rho$ when
she is forced a red \cite[Proposition 2.6, Lemma 2.7]{Hodkinson}.
Then, it can be shown exactly as in \cite{Hodkinson}, that $\Tm\At$ is representable as a set algebra with unit $^nM$.
We give more details. In the present context, after the splitting `the finitely many red colours' replacing each such red colour $\r_{kl}$, $k<l<n$  by $\omega$ many 
$\r_{kl}^i$, $i\in \omega$, the rainbow signature for the resulting rainbow theory as defined in \cite[Definition 3.6.9]{HHbook} call this theory $T_{ra}$,
 consists of $\g_i: 1\leq i<n-1$, $\g_0^i: 1\leq i\leq n+1$,
$\w_i: i<n-1$,  $\r_{kl}^t: k<l< n$, $t\in \omega$,
binary relations, and $n-1$ ary relations $\y_S$, $S\subseteq_{\omega} n+k-2$ or $S=n+1$. 
The set algebra ${\mathfrak{Bb}}(\A_{n+1, n}, \r, \omega)$ of dimension $n$ has
base an $n$--homogeneous  model $\Mo$ of another theory $T$ whose signature expands that of 
$T_{ra}$ by an additional binary relation (a shade of red) $\rho$.  
In this new signature $T$ is obtained from $T_{ra}$ by 
some axioms  (consistency conditions) extending $T_{ra}$. Such axioms (consistency conditions) 
specify consistent triples involving $\rho$. We call the models of $T$ {\it extended} coloured graphs. 
In particular, $\Mo$ is an extended coloured graph.
To build $\Mo$, the class of coloured graphs is considered in
the signature $L\cup \{\rho\}$ like in uual rainbow constructions as given above with the two additional forbidden triples
$(\r, \rho, \rho)$ and $(\r, \r^*, \rho)$, where $\r, \r^*$ are any reds. 
This model $\Mo$ is constructed as a countable limit of finite models of $T$ 
using a game played between \pe\ and \pa.   Here, unlike the extended $L_{\omega_1, \omega}$ theory 
dealt with in \cite{Hodkinson},  $T$ is a {\it first order one}
because the number of greens used are finite.
In the rainbow game \cite{HH, HHbook} 
\pa\ challenges \pe\  with  {\it cones} having  green {\it tints $(\g_0^i)$}, 
and \pe\ wins if she can respond to such moves. This is the only way that \pa\ can force a win.  \pe\ 
has to respond by labelling {\it appexes} of two succesive cones, having the {\it same base} played by \pa.
By the rules of the game, she has to use a red label. She resorts to  $\rho$ whenever
she is forced a red while using the rainbow reds will lead to an inconsistent triangle of reds;  \cite[Proposition 2.6, Lemma 2.7]{Hodkinson}.

We next embed $\CA_{n+1, n}$ into  the complex algebra $\Cm\bf At$, the \de\ completion of $\Tm\bf At$.
Let ${\sf CRG}_f$ denote  the class of coloured graphs on 
$\bf At_f$ and $\sf CRG$ be the class of coloured graph on $\bf At$. We 
can assume that  ${\sf CRG}_f\subseteq \sf CRG$.
Write $M_a$ for the atom that is the (equivalence class of the) surjection $a:n\to M$, $M\in \sf CRG$.
Here we identify $a$ with $[a]$; no harm will ensue.
We define the (equivalence) relation $\sim$ on $\At$ by
$M_b\sim N_a$, $(M, N\in {\sf CRG})$ $\iff$ they are everywhere identical except possibly at red edges:
$$M_a(a(i), a(j))=\r^l\iff N_b(b(i), b(j))=\r^k,  \text { for some $l,k$}\in \omega.$$
We say that $M_a$ is a {\it copy of $N_b$} if $M_a\sim N_b$. 
Now we define a map $\Theta: \CA_{n+1, n}=\Cm{\bf At_f}$ to $\Cm\At$,
by  specifing  first its values on ${\sf At}_f$,
via $M_a\mapsto \sum_jM_a^{(j)}$; where $M_a^{(j)}$ is a copy of $M_a$; each atom maps to the suprema of its 
copies.  (If $M_a$ has no red edges,  then by $\sum_jM_a^{(j)}$,  we understand $M_a$).
This map is extended to $\CA_{n+1, n}$ the obvious way. The map
$\Theta$ is well--defined, because $\Cm\At$ is complete. 
It is not hard to show that the map $\Theta$ 
is an injective homomorphim. 
We check preservation of all the $\QEA_n$ operations.  
The Boolean join is obvious.
\begin{itemize}
\item For complementation: It suffices to check preservation of  complementation `at atoms' of ${\bf At}_f$. 
So let $M_a\in {\bf At}_f$ with $a:n\to M$, $M\in \sf CGR_f\subseteq \sf CGR$. Then: 

$$\Theta(\sim M_a)=\Theta(\bigcup_{[b]\neq [a]} M_b)
=\bigcup_{[b]\neq [a]} \Theta(M_b)
=\bigcup_{[b]\neq [a]}\sum_j M_b^{(j)}$$
$$=\bigcup_{[b]\neq [a]}\sim \sum_j[\sim (M_a)^{(j)}]
=\bigcup_{[b]\neq [a]}\sim \sum_j[(\sim M_b)^j]
=\bigcup_{[b]\neq [a]}\bigwedge_j M_b^{(j)}$$
$$=\bigwedge_j\bigcup_{[b]\neq [a]}M_b^{(j)}
=\bigwedge_j(\sim M_a)^{j}
=\sim (\sum M_a^j)
=\sim \Theta(a)$$

\item Diagonal elements. Let $l<k<n$. Then:
\begin{align*}
M_x\leq \Theta({\sf d}_{lk}^{\Cm{\bf At}_f})&\iff\ M_x\leq \sum_j\bigcup_{a_l=a_k}M_a^{(j)}\\
&\iff M_x\leq \bigcup_{a_l=a_k}\sum_j M_a^{(j)}\\
&\iff  M_x=M_a^{(j)}  \text { for some $a: n\to M$ such that $a(l)=a(k)$}\\
&\iff M_x\in {\sf d}_{lk}^{\Cm\bf At}.
\end{align*}

\item Cylindrifiers. Let $i<n$. By additivity of cylindrifiers, we restrict our attention to atoms 
$M_a\in {\bf At}_f$ with $a:n\to M$, and $M\in {\sf CRG}_f\subseteq \sf CRG$. Then: 

$$\Theta({\sf c}_i^{\Cm{\bf At}_f}M_a)=f (\bigcup_{[c]\equiv_i[a]} M_c)
=\bigcup_{[c]\equiv_i [a]}\Theta(M_c)$$
$$=\bigcup_{[c]\equiv_i [a]}\sum_j M_c^{(j)}=\sum_j \bigcup_{[c]\equiv_i [a]}M_c^{(j)}
=\sum _j{\sf c}_i^{\Cm\bf At}M_a^{(j)}$$
$$={\sf c}_i^{\Cm\bf At}(\sum_j M_a^{(j)})
={\sf c}_i^{\Cm\bf At}\Theta(M_a).$$

\end{itemize}

It is straightforward to show that 
\pa\ has \ws\ first in the  \ef\ forth  private 
game played between \pe\ and \pa\ on the complete
irreflexive graphs $n+1$ and $n$ in 
$n+1$ rounds
${\sf EF}_{n+1}^{n+1}(n+1, n)$ \cite [Definition 16.2]{HHbook2}
since $n+1$ is `longer' than $n$. 
Here $r$ is the number of rounds and $p$ is the number of pairs of pebbles
on board. Using (any) $p>n$ many pairs of pebbles avalable on the board \pa\ can win this game in $n+1$ many rounds.
In each round $0,1\ldots n$, \pe\ places a new pebble  on  a new element of $n+1$.
The edge relation in $n$ is irreflexive so to avoid losing
\pe\ must respond by placing the other  pebble of the pair on an unused element of $n$.
After $n$ rounds there will be no such element, so she loses in the next round.
 \pa\  lifts his \ws\ from the private \ef\ forth game ${\sf EF}_{n+1}^{n+1}(n+1, n)$ to the graph game on ${\bf At}_f=\At(\A_{n+1,n})$ 
\cite[pp. 841]{HH} forcing a
win using $n+3$ nodes. 
He bombards \pe\ with cones
having  common
base and distinct green  tints until \pe\ is forced to play an inconsistent red triangle (where indicies of reds do not match).
Thus \pa\ has  a 
\ws\ for \pe\ in $\bold G^{n+3}\At(\CA_{n+1, n})$
using the usual rainbow strategy by bombarding \pe\ with cones having the same base and distinct green tints.
He needs $n+3$ nodes to implement his \ws. In fact he need $n+3$ nodes to force a win in the weaker game $G^{n+3}_{\omega}$ without the need to
resue the nodes in play.
Then by Lemma \ref{n}, this implies that  $\CA_{n+1,n}$ does not have an $n+3$--square representation. Since $\CA_{n+1,n}$ embeds into $\Cm\bf At$, 
hence $\Cm\bf At$  does not have an $n+3$--square representation, too.
\end{proof}

The following definition to be used in the sequel is taken from \cite{ANT}:
\begin{definition}\label{strongblur}\cite[Definition 3.1]{ANT}
Let $\R$ be a relation algebra, with non--identity atoms $I$ and $2<n<\omega$. Assume that  
$J\subseteq \wp(I)$ and $E\subseteq {}^3\omega$.
\begin{enumerate}
\item We say that $(J, E)$  is an {\it $n$--blur} for $\R$, if $J$ is a {\it complex $n$--blur} defined as follows:   
\begin{enumarab}
\item Each element of $J$ is non--empty,
\item $\bigcup J=I,$
\item $(\forall P\in I)(\forall W\in J)(I\subseteq P;W),$
\item $(\forall V_1,\ldots V_n, W_2,\ldots W_n\in J)(\exists T\in J)(\forall 2\leq i\leq n)
{\sf safe}(V_i,W_i,T)$, that is there is for $v\in V_i$, $w\in W_i$ and $t\in T$,
we have
$v;w\leq t,$ 
\item $(\forall P_2,\ldots P_n, Q_2,\ldots Q_n\in I)(\forall W\in J)W\cap P_2;Q_n\cap \ldots P_n;Q_n\neq \emptyset$.
\end{enumarab}
and the tenary relation $E$ is an {\it index blur} defined  as 
in item (ii) of \cite[Definition 3.1]{ANT}.

\item We say that $(J, E)$ is a {\it strong $n$--blur}, if it $(J, E)$ is an $n$--blur,  such that the complex 
$n$--blur  satisfies:
$$(\forall V_1,\ldots V_n, W_2,\ldots W_n\in J)(\forall T\in J)(\forall 2\leq i\leq n)
{\sf safe}(V_i,W_i,T).$$ 
\end{enumerate}
\end{definition}

\begin{theorem}\label{ANT}
For every $2<n<l<\omega$, there is an algebra $\B$  
in $\Nr_n\CA_l\cap \RCA_n$,  but is not strongly representable up to $l$ and $\omega$. In particular, $\B$ is  not strongly representable.
\end{theorem}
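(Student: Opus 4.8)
The plan is to use the blow--up--and--blur construction of \cite{ANT}, adapted to the cylindric setting in the spirit of the embedding $\CA_{n+1,n}\hookrightarrow\Cm\At$ carried out in the proof of Theorem~\ref{can}. First I would fix a finite relation algebra $\R$ whose non--identity atoms $I$ carry a \emph{strong} $n$--blur $(J,E)$ in the sense of Definition~\ref{strongblur}, but which is itself non--representable, the non--representability being witnessed by a winning strategy for \pa\ in the relevant atomic game on $\R$; a suitable Maddux algebra or a finite rainbow relation algebra with enough non--identity atoms does the job. The two features of $\R$ serve complementary purposes: the non--representability is what will sink the completion, while the \emph{strength} of the blur (the universally quantified safety clause of part~(2) of Definition~\ref{strongblur}, in place of the merely existential clause of the ordinary $n$--blur) is what will push the blown--up algebra up to an $l$--dilation.

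Next I would perform the blow--up--and--blur: split each non--identity atom of $\R$ into $\omega$ copies indexed by $\omega$ and use the index blur $E$ together with the complex blur $J$ to define a $\CA_n$ atom structure $\At$ on $(I\times\omega)\cup\{\Id\}$. I set $\B=\Tm\At$, the subalgebra of $\Cm\At$ generated by the atoms. The ordinary $n$--blur conditions of Definition~\ref{strongblur} are exactly what is needed to manufacture a classical representation of the countable algebra $\B$, the blurring washing out at the level of the term algebra the forbidden configurations that make $\R$ non--representable; this gives $\B\in\RCA_n$. To secure $\B\in\Nr_n\CA_l$ I would run the same construction in dimension $l$, obtaining a dilation $\D\in\CA_l$, and then verify that $\B$ is \emph{exactly} its $n$--neat reduct, $\B=\Nr_n\D$. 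Here the strong, dimension--robust blur is indispensable, as it is what guarantees that the safety requirement survives into dimension $l$ and hence that the cylindrifiers of $\D$ restrict correctly to $\B$; Lemma~\ref{flat}, through the equivalence between membership in $\bold S\Nr_n\CA_l$ and possession of an $l$--flat representation, is the natural vehicle for transporting the representation into the required dilation.

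For the completion, since $\Cm\At=\Cm\At\B$ is complete, the suprema $\sum_{j<\omega}$ of the $\omega$ copies of each split atom exist and reassemble the un--blurred atoms, so that a copy of the non--representable $\R$ re--embeds into $\Cm\At\B$, precisely as $\CA_{n+1,n}$ re--embeds into $\Cm\At$ in Theorem~\ref{can}. As representability is inherited by subalgebras and $\RCA_n=\bold S\Nr_n\CA_\omega$, this forces $\Cm\At\B\notin\RCA_n$. Hence $\B$, though lying in $\Nr_n\CA_l\cap\RCA_n$, is not strongly representable up to $l$ and $\omega$: its \de\ completion is not representable. Since this conclusion about $\Cm\At\B$ is independent of $l$, it also gives the ``in particular'' clause that $\B$ is not strongly representable.

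I expect the main obstacle to be the middle step, establishing the exact equality $\B=\Nr_n\D$ rather than the weaker $\B\in\bold S\Nr_n\CA_l$. Proving that the neat embedding is \emph{onto} $\Nr_n\D$ requires tracking precisely how the strong blur governs the substitution--cylindrifier words ${\sf s}_{\bar b}$ of Definition~\ref{sub} inside the dilation, and checking that every element of $\Nr_n\D$ is already a term over the blurred atoms. By comparison, the representability of $\B$ and the non--representability of $\Cm\At\B$ are comparatively routine once the construction is in place: the former is delivered by the blur conditions, the latter by the re--embedding of $\R$.
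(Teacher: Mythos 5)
Your architecture is the paper's own (blow up and blur a finite relation algebra $\R$, represent the term algebra via the blurs, lift to dimension $l$ through the basic matrices and identify $\B$ with a neat reduct via \cite[item (3), p.~80]{ANT}, then sink $\Cm\At\B$ by re-assembling a copy of $\R$ inside it), but two of your hypotheses on $\R$ are off, and each creates a genuine gap. The first is the dimension of the blur: you fix a \emph{strong $n$-blur} and then propose to ``run the same construction in dimension $l$''. This is backwards. The blur conditions of Definition~\ref{strongblur} are quantified over tuples whose length is the dimension, and they propagate \emph{downwards}: as the paper notes, a strong $l$-blur is a strong $j$-blur for all $n\leq j\leq l$, but a strong $n$-blur need not be even an $l$-blur when $l>n$ (which is why the number of atoms of the Maddux algebras must grow with $l$). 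Without an $l$-blur, ${\sf Mat}_l({\bf At})$ need not be an $l$-dimensional cylindric basis, so your dilation $\D\in\CA_l$ is simply not available; and without the \emph{strong} $l$-blur, the isomorphism $\B\cong{\mathfrak{Bb}}_l(\R,J,E)$-side, namely $\B\cong\Nr_n{\mathfrak{Bb}}_l(\R,J,E)$ (precisely the surjectivity you flag as the main obstacle), is not delivered by \cite{ANT}. The hypothesis must therefore be a strong $l$-blur, from which the strong $n$-blur follows, not conversely; the Maddux algebras $\mathfrak{E}_k(2,3)$ supply one for each $l$, with the number $k$ of non-identity atoms depending recursively on $l$.

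The second gap is your requirement that $\R$ be outright non-representable, with \pa\ winning the atomic game, allegedly witnessed by ``a suitable Maddux algebra or a finite rainbow relation algebra''. The Maddux algebra $\mathfrak{E}_k(2,3)$ is \emph{representable}; what Ramsey's theorem gives (since exactly the monochromatic triangles are forbidden) is that all its representations have finite base. Moreover, no finite relation algebra is known that is non-representable \emph{and} carries a strong $l$-blur: the existence of finite relation algebras combining strong blurs with the failure of (infinite) bases or hyperbases of prescribed finite dimension is exactly the unproven hypothesis on which the conditional Theorem~\ref{fl} rests, so, taken literally, your proof stands on an open problem. Fortunately, non-representability is more than you need: $\Cm\At\B$ has infinitely many atoms, so any representation of it lives on an infinite base and induces, through the embedded copy of $\R$, a representation of $\R$ on an infinite base; hence ``representable only on finite bases'' suffices to sink the completion, and that is the property the paper actually uses. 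With these two corrections (strong $l$-blur in place of strong $n$-blur; finite-base representability in place of non-representability) your proposal becomes the paper's proof.
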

\begin{proof} We give an example of a blowing up and blurring a finite relation algebra $\R$  getting an infinite countable atomic ${\cal R}\in \sf RA$ such that that $\At\cal R$ is  weakly
but not strongly representable.
Furthermore $\cal R$ has an $n$ dimensional cylindric basis, and ${\sf Mat}_n(\At\cal \R)$ is a weakly but 
not strongly representable $\CA_n$ atom structure. This example is based on a generalization of the construction in \cite{ANT}. 
Our exposition of the construction in \cite{ANT} will be addressing an (abstract) finite relation algebra $\R$ having an $l$--blur in the sense of definition \cite[Definition 3.1]{ANT}, 
with $3\leq l\leq k<\omega$ and $k$ depending on $l$.  Occasionally we use the concrete 
Maddux algebra $\mathfrak{E}_k(2, 3)$ to make certain concepts more tangible.
Here $k$ is the number of non-identity atoms is concrete example of $\R$. 
In this algebra a triple $(a, b, c)$ of non--identity atoms is consistent $\iff$ $|\{a, b, c\}|\neq 1$, i.e 
only monochromatic triangles are forbidden. 

We use the notation in \cite{ANT}. Let $2<n\leq l<\omega$. One starts with a finite relation algebra $\R$ that has only representations, if any, on finite sets (bases), 
having an $l$--blur $(J, E)$ as in \cite[Definition 3.1]{ANT} recalled in definition \ref{strongblur}. 
After {\it blowing up and bluring $\R$}, by splitting each of its atoms into infinitely many, one gets 
an infinite atomic representable relation algebra ${\mathfrak Bb}(\R, J, E)$ \cite[p.73]{ANT}, whose atom structure $\bf At$ is weakly but not
strongly representable. The atom structure $\bf  At$ is not strongly representable, because $\R$ is {\it not blurred} in ${\sf Cm}\bf At$. 
The finite relation algebra $\R$ embeds into $\Cm\bf At$, so that a representation 
of $\Cm\bf At$, necessarily on
an infinite base, induces one of $\R$ on the same base, which is impossible.
The representability of ${\mathfrak Bb}(\R, J, E)$ depend on the properties of the $l$--blur,  which {\it blurs $\R$ in ${\mathfrak Bb}(\R, J, E)$}.
The set of blurs here, namely, $J$ is finite. In the case of $\mathfrak{E}_k(2, 3)$ used in \cite{ANT},  the set of blurs 
is the set of all subsets of non--identity atoms having the same size $l<\omega$, where $k=f(l)\geq l$ 
for some recursive function $f$ from $\omega\to \omega$, so that $k$ depends recursively on $l$. 
One (but not the only) way to define the {\it index blur} $E\subseteq {}^3\omega$ is as follows \cite[Theorem 3.1.1]{Sayed}:
$E(i,j,k)\iff (\exists p,q,r)(\{p,q,r\}=\{i,j,k\} \text { and } r-q=q-p.$
This is a concrete instance of an index blur as defined in \cite[Definition 3.1(iii)]{ANT} (recalled in definition \ref{strongblur} above), 
but defined uniformly, it does not depends on the blurs.
The underlying set of $\bf At$, the atom structure of ${\mathfrak Bb}(\R, J, E)$ is the following set consisting of triplets:
$At=\{(i, P, W): i\in \omega, P\in \At\R\sim \{\Id\}, W\in J\}\cup \{\Id\}$.
When $\R=\mathfrak{E}_k(2, 3)$ (some finite $k>0)$, composition  is defined by singling out the following (together with their Peircian transforms), 
as the consistent triples:
$(a, b, c)$ is consistent $\iff$ one of $a, b, c$ is $\sf Id$ and the other two are equal, or 
if $a=(i, P, S), b=(j, Q, Z), c= (k, R, W)$ 
$$S\cap Z\cap W\neq \emptyset \implies E(i,j,k)\&|\{P,Q,R\}|\neq 1.$$ 
(We are avoiding mononchromatic triangles).
That is if for $W\in J$,  $E^W=\{(i, P, W): i\in \omega, P\in W\},$
then $$(i, P, S); (j, Q, Z)=\bigcup\{E^W: S\cap Z\cap W=\emptyset\}$$
$$\bigcup \{(k, R, W): E(i,j,k), |\{P,Q,R\}|\neq 1\}.$$

More generally, for the $\R$ as postulated in the hypothesis, composition in $\bf At$ is defined as follow. First 
the index blur $E$ can be taken to be like above. 
Now 
the triple  $((i, P, S), (j, Q, Z), (k, R, W))$ in which no two entries are equal, is consistent
if either $S, Z, W$ are ${\it safe}$, briefly ${\sf safe}(S, Z, W)$, 
witness item  (4) in definition \ref{strongblur} (which vacuously hold if $S\cap Z\cap W=\emptyset$), 
or $E(i, j, k)$ and $P; Q\leq  R$ in $\R$. 
This generalizes the above definition of composition, 
because in $\mathfrak{E}_k(2, 3)$, the triple of non--identity atoms 
$(P, Q, R)$ is consistent $\iff$ they do not have the same colour $\iff$ $|\{P, Q, R\}|\neq 1.$
Having specified its atom structure,  its timely to 
specfiy the relation algebra ${\mathfrak Bb}(\R, J, E)\subseteq \Cm{\bf At}$.
The relation algebra ${\mathfrak Bb}(\R, J, E)$ is $\Tm\bf At$ (the term algebra).
Its  universe is the set $\{X\subseteq H\cup \{\Id\}: X\cap E^W\in {\sf Cof}(E^W), \text{ for all } W\in J\}$, where 
${\sf Cof}(E^{W})$ denotes the set of co--finite subsets of $E^{W}$, 
that is subsets of $E^W$ whose complement is infinite,  with $E^W$ as defined above. The relation algebra 
operations are lifted from $\bf At$ the usual way.
The algebra  ${\mathfrak Bb}(\R, J, E)$ is proved to be representable \cite{ANT} as shown next.
For brevity, denote ${\mathfrak Bb}(\R, J, E)$ by $\cal R$, and its domain by $R$.
For $a\in \bf At$, and $W\in J,$  set
$U^a=\{X\in R: a\in X\}\text { and } U^{W}=\{X\in R: |X\cap E^W|\geq \omega\}.$
Then the principal ultrafilters of $\cal R$ are exactly $U^a$, $a\in H$ and $U^W$
are non-principal ultrafilters for $W\in J$ when $E^W$ is infinite.
Let  $J'=\{W\in J: |E^W|\geq \omega\},$
and let ${\sf Uf}=\{U^a: a\in F\}\cup \{U^W: W\in J'\}.$
${\sf Uf}$ is the set of ultrafilters of $\cal R$ which is used as colours
to represent $\cal R$, cf. \cite[pp. 75-77]{ANT}. The representation is 
built from coloured graphs whose edges are labelled 
by elements in ${\sf Uf}$   in a fairly standard step--by--step construction.

Now we show  why the \de\ completion $\Cm \bf At$ is {\it not} representable. 
For $P\in I$, let $H^P=\{(i, P,W): i\in \omega, W\in J, P\in W\}$.
Let  $P_1=\{H^P: P\in I\}$ and $P_2=\{E^W: W\in J\}$.   These are two partitions of $At$. 
The partition $P_2$  was used to {\it represent},
${\mathfrak Bb}(\R, J, E)$, in the sense that the tenary relation corresponding to composition 
was defined on $\bf At$, in a such a way so that the singletons generate the partition
$(E^W: W\in J)$ up to ``finite deviations." 
The  partition $P_1$ will now be used to show that $\Cm({\mathfrak Bb}(\R, J, E))=\Cm (\bf At)$ 
is {\it not }  representable.   This follows by observing that 
omposition restricted to $P_1$ satisfies: $ H^P;H^Q=\bigcup \{H^Z: Z;P\leq Q \text { in } \R\}$
which means that $\R$ embeds into the complex algebra 
$\Cm \bf At$ prohibiting its representability, 
because $\R$ allows only representations having 
a finite base.
So far we have been dealing with relation algebras. The construction lifts to higher dimensions expressed in $\CA_n$s, $2<n<\omega$. 
as shown next. 
Let $\R$ be as in the hypothesis. 
Let $3<n\leq l$. We blow up and blur $\R$. $\R$ is blown up by splitting all of the atoms each to infinitely many
defining an (infinite atoms) structure $\bf At$.
$\R$ is blurred by using a finite set of blurs (or colours) $J$. 
The term algebra ${\Bb}(\R, J, E)$) over $\bf At$, 
 is representable using the finite number of blurs. Such blurs are basically non--principal ultrafilters; they are used as colours together 
with the principal ultrafilters (the atoms) to represent completely the canonical extension of ${\Bb}(\R, J, E)$. 
Because $(J, E)$ is a complex set of $l$--blurs, this atom structure has an $l$--dimensional cylindric basis, 
namely, ${\bf At}_{ca}={\sf Mat}_l(\bf At)$. The resulting $l$--dimensional cylindric term algebra $\Tm{\sf Mat}_l(\bf At)$, 
and an algebra $\C$ having atom structure ${\bf At}_{ca}$ (denoted in \cite{ANT} by 
$\mathfrak{Bb}_l(\R, J, E)$) such that $\Tm{\sf Mat}_l({\bf At})\subseteq \C\ \subseteq \Cm{\sf Mat}_l(\bf At)$ 
is shown to be  representable.  
Assume that the $m$--blur $(J, E)$ is strong, then by definition $(J, E)$ is a strong  $j$ blur for all $n\leq j\leq m$.
Furthermore,  by \cite[item (3) pp. 80]{ANT},  
$\Bb(\R, J, E)=\Ra{\Bb}_j(\R, J, E))$ 
and ${\Bb}_j(\R, J, E)\cong \mathfrak{Nr}_j{{\Bb}}_m(\R, J, E)$. 
\end{proof} 
${\sf LCA}_n$ denotes the elementary class of ${\sf RCA}_n$s satisfying the Lyndon conditions \cite[Definition 3.5.1]{HHbook2}.
\begin{theorem}\label{square} Let $2<n<m\leq \omega$.
Then ${\bf El}\Nr_n\CA_{\omega}\cap {\bf At}\subsetneq {\sf LCA}_n$. Furthermore, 
for any elementary class $\sf K$ between ${\bf El}\Nr_n\CA_{\omega}\cap \bf At$ and ${\sf LCA}_n$, ${\sf RCA}_n$ is generated by 
$\At\sf K$.
\end{theorem}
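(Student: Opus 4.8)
The plan is to separate a soft model--theoretic core (the inclusion and the ``Furthermore'' clause) from the one genuinely hard construction (properness), using throughout the identity ${\sf LCA}_n={\bf El}{\sf CRCA}_n={\bf El}(\bold S_c\Nr_n\CA_\omega\cap{\bf At})$ recalled in the introduction together with $\bold I\subseteq\bold S_c$. For the inclusion I would first record that atomicity is \emph{first order} in the signature of $\CA_n$: writing $\mathrm{at}(y)$ for $y\neq 0\wedge\forall z(z\le y\to(z=0\vee z=y))$, the single sentence $\forall x(x\neq 0\to\exists y(\mathrm{at}(y)\wedge y\le x))$ axiomatizes ${\bf At}$, so ${\bf At}$ is closed under $\equiv$ and ultrapowers. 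Two consequences follow at once. On the one hand ${\sf LCA}_n={\bf El}{\sf CRCA}_n\subseteq{\bf El}\,{\bf At}={\bf At}$, so both sides of the asserted inclusion consist of atomic algebras. On the other hand, if $\A\in{\bf El}\Nr_n\CA_\omega\cap{\bf At}$, say $\A$ atomic with $\A\equiv\B$ and $\B\in\Nr_n\CA_\omega$, then $\B$ is atomic as well, whence $\B\in\Nr_n\CA_\omega\cap{\bf At}\subseteq\bold S_c\Nr_n\CA_\omega\cap{\bf At}\subseteq{\sf LCA}_n$; since ${\sf LCA}_n$ is elementary and $\A\equiv\B$, this gives $\A\in{\sf LCA}_n$. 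This settles $\subseteq$ with no appeal to games.

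Because atomicity is elementary, properness cannot be witnessed by a non--atomic algebra, so I must produce an \emph{atomic} $\A\in{\sf LCA}_n\setminus{\bf El}\Nr_n\CA_\omega$. The natural source is the proper inclusion $\Nr_n\CA_\omega\cap{\bf At}\subsetneq\bold S_c\Nr_n\CA_\omega\cap{\bf At}$ together with its persistence under elementary closure. I would take an atomic $\A\in\bold S_c\Nr_n\CA_\omega\subseteq{\sf LCA}_n$ (a completely representable algebra) on which \pe\ still wins the game certifying membership in $\bold S_c\Nr_n\CA_\omega$, while \pa\ wins, \emph{in a bounded number of rounds}, the stronger neat game in which \pa\ may probe the spare dimensions — the game whose loss excludes membership in $\Nr_n\CA_\omega$. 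A finite--round win for \pa\ is expressed by a single first--order sentence, so \pa\ wins that game on every algebra elementarily equivalent to $\A$; as every member of $\Nr_n\CA_\omega$ must have \pe\ winning it, no algebra $\equiv\A$ is a neat reduct, i.e.\ $\A\notin{\bf El}\Nr_n\CA_\omega$. A rainbow algebra of the kind recalled before Theorem~\ref{can}, with greens and reds tuned so that the green/red mismatch is fatal in the neat game but harmless once unboundedly many nodes are available, should supply such an $\A$. Pinning down the correct neat game and checking that \pe\ wins the $\bold S_c$--game while \pa\ wins the neat game in finitely many rounds is the main obstacle.

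For the ``Furthermore'' clause, fix an elementary $\sf K$ with ${\bf El}\Nr_n\CA_\omega\cap{\bf At}\subseteq{\sf K}\subseteq{\sf LCA}_n$ and let $\bold V$ be the variety generated by $\{\Cm\At\A:\A\in{\sf K}\cap{\bf At}\}$; the claim is $\bold V={\sf RCA}_n$. For $\bold V\subseteq{\sf RCA}_n$ I use the chain from the introduction: ${\sf K}\cap{\bf At}\subseteq{\sf LCA}_n\subseteq{\sf SRCA}_n$, so each such $\A$ is strongly representable, $\Cm\At\A\in{\sf RCA}_n$, and since ${\sf RCA}_n$ is a variety it contains $\bold V$. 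For the reverse inclusion I exhibit generators already at the bottom of the interval: every full set algebra $\wp({}^nU)$ equals the neat reduct $\Nr_n\wp({}^\omega U)$, hence lies in $\Nr_n\CA_\omega\cap{\bf At}\subseteq{\bf El}\Nr_n\CA_\omega\cap{\bf At}\subseteq{\sf K}$, and being complete and atomic it coincides with $\Cm\At\wp({}^nU)$. Since ${\sf RCA}_n=\bold I{\sf Gs}_n\subseteq\bold S\bold P\{\wp({}^nU):U\text{ a set}\}$, it follows that ${\sf RCA}_n\subseteq\bold S\bold P\{\Cm\At\wp({}^nU):U\}\subseteq\bold V$. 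The two inclusions yield $\bold V={\sf RCA}_n$ for \emph{every} admissible $\sf K$, which is exactly the stated robustness; here the hard inputs (strong representability of Lyndon algebras, and the neat--reduct identity for full set algebras) are all imported from results recalled earlier.
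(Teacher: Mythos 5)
Your first and last paragraphs are essentially right. The inclusion argument is correct, and making atomicity explicitly first order so that it transfers across $\equiv$ is exactly the bookkeeping needed; the ``Furthermore'' clause as you prove it is in substance the paper's own proof (full set algebras $\wp({}^nU)\cong\Nr_n\wp({}^{\omega}U)$ sit at the bottom of the interval, ${\sf RCA}_n={\bf SP}$ of these, and the upper bound comes from ${\sf K}\subseteq{\sf LCA}_n\subseteq{\sf SRCA}_n$ together with ${\sf RCA}_n$ being a variety). One caveat: the identity ${\sf LCA}_n={\bf El}(\bold S_c\Nr_n\CA_{\omega}\cap{\bf At})$ that you import is proved in this paper only later (Theorem \ref{finalresult}), and its proof refers back to the present theorem; to avoid circularity you should prove the step $\Nr_n\CA_{\omega}\cap{\bf At}\subseteq{\sf LCA}_n$ directly, as the paper does: if $\A\in\Nr_n\CA_{\omega}$ is atomic, then \pe\ has a \ws\ in $\bold G^{\omega}(\At\A)$, hence in $G_k(\At\A)$ for every $k<\omega$, so $\A\in{\sf LCA}_n$ by definition.

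The genuine gap is the properness claim ${\bf El}\Nr_n\CA_{\omega}\cap{\bf At}\subsetneq{\sf LCA}_n$: you produce no witness, and you concede that ``pinning down the correct neat game'' is unresolved. This is not a technicality, because the mechanism you propose is in structural tension with itself. Your witness must remain in ${\sf LCA}_n$, so \pe\ must win every finite-round Lyndon game on it; simultaneously you want \pa\ to win, in finitely many rounds, a first-order expressible game whose \pe-win is forced by membership in $\Nr_n\CA_{\omega}\cap{\bf At}$. Such a game would have to separate $\Nr_n\CA_{\omega}$ from $\bold S_c\Nr_n\CA_{\omega}$ in finite-round play. The rainbow algebras recalled before Theorem \ref{can} are calibrated the opposite way: \pa's wins there use boundedly many nodes and expel the algebra from $\bold S_c\Nr_n\CA_{n+3}$, hence from ${\sf CRCA}_n$; while the rainbow algebras that \emph{do} stay in ${\sf LCA}_n$ (such as the one based on $\Z$ and $\N$ used in Theorem \ref{main2}) are exactly ones on which \pe\ also wins the finite-round neat-type games, which is how one proves membership in elementary closures, not exclusion from them. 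Since $\bold S_c$- and completeness-failures are invisible to elementary equivalence, no game argument of the kind you sketch is known to yield exclusion from ${\bf El}\Nr_n\CA_{\omega}$.

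The paper avoids games entirely for this step. Its witness is concrete: let $\A=\wp({}^n\Q)\in{\sf Cs}_n$ (so $\A\in\Nr_n\CA_{\omega}$), let $y=\{s\in{}^n\Q: s_0+1=\sum_{i>0}s_i\}$, and let $\B=\Sg^{\A}(\{y\}\cup\{\{s\}:s\in{}^n\Q\})$. Then $\B$ is atomic with the singletons as atoms, $\Cm\At\B=\A$, and $\B$ is completely representable, hence $\B\in{\sf CRCA}_n\subseteq{\sf LCA}_n$; but by the result of \cite{SL}, $\B\notin{\bf El}\Nr_n\CA_{n+1}\supseteq{\bf El}\Nr_n\CA_{\omega}$. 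The exclusion from the elementary closure rests on the cardinality/generation argument of \cite{SL} about this particular $\B$, not on any game, and that is precisely the ingredient your proposal is missing.
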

\begin{proof} It suffices to show that ${\sf Nr}_n\CA_{\omega}\cap {\bf At}\subseteq {\sf LCA}_n$, since the last class is elementary. This follows from Lemma \ref{n}, 
since if $\A\in \Nr_n\CA_{\omega}$ is atomic, then \pe\ has a \ws\ in $\bold G^{\omega}(\At\A)$, hence in $G_{\omega}(\At\A)$, {\it a fortiori}, \pe\ has a \ws\ 
in $G_k(\At\A)$ for all $k<\omega$, so (by definition) $\A\in {\sf LCA}_n$. 
To show strictness of the last inclusion, let $V={}^n\Q$ and let ${\A}\in {\sf Cs}_n$ have universe $\wp(V)$.
Then $\A\in {\sf Nr}_{n}\CA_{\omega}$.  
Let 
$y=\{s\in V: s_0+1=\sum_{i>0} s_i\}$ and ${\B}=\Sg^{\A}(\{y\}\cup X)$, where $X=\{\{s\}: s\in V\}$. 
Now $\B$ and $\A$ having same top element $V$, share the same atom structure, namely, the singletons, so 
$\Cm\At\B=\A$. Furthermore, plainly $\A, \B\in {\sf CRCA}_n$. 
So $\B\in {\sf CRCA}_n\subseteq {\sf LCA}_n$, 
and as proved in \cite{SL}, $\B\notin {\bf  El}\Nr_{n}{\sf CA}_{n+1}$, hence $\B$ witnesses the required strict inclusion. 

Now we show that $\At{\bf El}\Nr_n\CA_{\omega}$ generates $\RCA_n$.
Let ${\sf FCs}_n$ denote the class of {\it full} $\Cs_n$s, that is ${\sf Cs}_n$s  
having universe $\wp(^nU)$
($U$ non--empty set). 
First we show that ${\sf FCs}_n\subseteq \Cm\At\Nr_n\CA_{\omega}$.
Let $\A\in {\sf FCs}_n$.  Then $\A\in \Nr_n\CA_{\omega}\cap \bf At$, hence $\At\A\in \At\Nr_n\CA_{\omega}$ 
and $\A=\Cm\At\A\in \Cm\At\Nr_n\CA_{\omega}$.
The required now follows from the following chain of inclusions: 
$\RCA_n={\bf SP}{\sf FCs}_n\subseteq {\bf SP}\Cm\At(\Nr_n\CA_{\omega})\subseteq {\bf SP}\Cm\At({\bf El}\Nr_n\CA_{\omega})\subseteq 
{\bf SP}\Cm \At{\sf K}\subseteq {\bf SP}\Cm{\sf LCAS}_n\subseteq  {\sf RCA}_n,$ where $\sf K$ is given above.

\end{proof}

Let $2<n\leq l\leq m\leq \omega$.
Denote the class of $\CA_n$s having the complex $\bold O$ neat embedding property up to $m$ by ${\sf CNPCA}_{n,m}^{\bold O}$,
and let ${\sf RCA}_{n,m}^{\bold O}:={\sf CNPCA}_{n,m}^{\bold O}\cap {\sf RCA}_n$.
Denote the class of strongly representable $\CA_n$s up to $l$ and $m$ by ${\sf RCA}_n^{l,m}$.
Observe that ${\sf RCA}_n^{n, m}={\sf RCA}_{n, m}^{\bold S}$ and that when $m=\omega$ both classes coincide with the class of strongly representable $\CA_n$s.
For a class $\bold K$ of $\sf BAO$s, $\bold K\cap {\sf Count}$ 
denotes the class of countable algebras in $\bold K$,  and recall that $\bold K\cap \bf At$ denotes the class of atomic algebras in $\bold K$. 
\begin{theorem}\label{main2} Let $2<n\leq l<m\leq \omega$ and $\bold O\in \{\bf S, S_c, S_d, I\}$. 
Then the following hold:
\begin{enumerate}
\item ${\sf RCA}_{n,m}^{\bold O}\subseteq {\sf RCA}_{n,l}^{\bold O}$ 
and  ${\sf RCA}_{n,l}^{\bf I}\subseteq {\sf RCA}_{n, l}^{\bold S_d}\subseteq {\sf RCA}_{n, l}^{\bold S_c}\subseteq {\sf RCA}_{n,l}^{\bold S}$. 
The  last inclusion is proper for $l\geq n+3$,
\item For $\bold O\in \{\bf S, S_c, S_d\}$, ${\sf CNPCA}_{n,l}^{\bold O}\subseteq \bold O\Nr_n\CA_l$ (that is the complex $\bold O$ neat embedding property is stronger than the $\bold O$ 
neat embedding property),
and for $\bold O=\bold S$, the inclusion is proper for $l\geq n+3$. But for $\bold O=\bold I$, ${\sf CNPCA}_{n,l}^{\bold I}\nsubseteq \Nr_n\CA_l$ 
(so the complex $\bold I$  neat embedding property {\it does not imply} 
the $\bold I$ neat embedding property), 
\item If $\A$ is finite, then $\A\in {\sf CNPCA}_{n,l}^{\bold O}\iff \A\in \bold O\Nr_n\CA_l$ 
and $\A\in {\sf RCA}_{n, l}^{\bold O}\iff \A\in \RCA_n\cap \bold O\Nr_n\CA_l$.
Furthermore, for any positive $k$, ${\sf CNPCA}_{n, n+k+1}^{\bold O}\subsetneq {\sf CNPCA}_{n, {n+k}}^{\bold O},$
and finally ${\sf CNPCA}_{n,\omega}^{\bold O}\subsetneq \RCA_n,$
\item  $(\exists \A\in \RCA_n\cap {\bf At}\sim {\sf CNPCA}_{n,l}^{\bold S})\implies \bold S{\sf Nr}_n\CA_k$ is not 
atom--canonical for all $k\geq l$. In particular, $\bold S{\sf Nr}_n\CA_k$ is not atom--canonical for all $k\geq n+3$,
\item If $\bold S\Nr_n\CA_l$ is atom--canonical, then $\RCA_{n,l}^{\bold S}$ is first order definable. 
There exists a finite $k>n+1$, such that ${\sf RCA}_{n, k}^{\bold S}$ is not first order definable.
\item Let $2<n<l\leq \omega$. Then $\RCA_n^{l, \omega}\cap {\sf Count}\neq \emptyset\iff l<\omega$.

\end{enumerate} 
\end{theorem}
\begin{proof}
 
(1): The inclusions in the first item are by definition. To show the strictness of the last inclusion, we proceed in this way. 
We show that there an $\sf RCA_n$ with countably many atoms 
outside $\bold S_c\Nr_n\CA_{n+3}$.
Take the a rainbow--like $\CA_n$, call it $\C$, based on the ordered structure $\Z$ and $\N$.
The reds ${\sf R}$ is the set $\{\r_{ij}: i<j<\omega(=\N)\}$ and the green colours used 
constitute the set $\{\g_i:1\leq i <n-1\}\cup \{\g_0^i: i\in \Z\}$. 
In complete coloured graphs the forbidden triples are like 
the usual rainbow constructions based on $\Z$ and $\N$,   
but now 
the triple  $(\g^i_0, \g^j_0, \r_{kl})$ is also forbidden if $\{(i, k), (j, l)\}$ is not an order preserving partial function from
$\Z\to\N$.
It can be shown that \pa\ has a \ws\ in the graph version of the game 
$\bold G^{n+3}(\At\C)$ played on coloured graphs \cite{HH}.
The rough idea here, is that, as is the case with \ws's of \pa\ in rainbow constructions, 
\pa\ bombards \pe\ with cones having distinct green tints demanding a red label from \pe\ to appexes of succesive cones.
The number of nodes are limited but \pa\ has the option to re-use them, so this process will not end after finitely many rounds.
The added order preserving condition relating two greens and a red, forces \pe\ to choose red labels, one of whose indices form a decreasing 
sequence in $\N$.  In $\omega$ many rounds \pa\ 
forces a win, 
so $\C\notin \bold S_c{\sf Nr}_n\CA_{n+3}$.
More rigorously, \pa\ plays as follows: In the initial round \pa\ plays a graph $M$ with nodes $0,1,\ldots, n-1$ such that $M(i,j)=\w_0$
for $i<j<n-1$
and $M(i, n-1)=\g_i$
$(i=1, \ldots, n-2)$, $M(0, n-1)=\g_0^0$ and $M(0,1,\ldots, n-2)=\y_{\Z}$. This is a $0$ cone.
In the following move \pa\ chooses the base  of the cone $(0,\ldots, n-2)$ and demands a node $n$
with $M_2(i,n)=\g_i$ $(i=1,\ldots, n-2)$, and $M_2(0,n)=\g_0^{-1}.$
\pe\ must choose a label for the edge $(n+1,n)$ of $M_2$. It must be a red atom $r_{mk}$, $m, k\in \N$. Since $-1<0$, then by the `order preserving' condition
we have $m<k$.
In the next move \pa\ plays the face $(0, \ldots, n-2)$ and demands a node $n+1$, with $M_3(i,n)=\g_i$ $(i=1,\ldots, n-2)$,
such that  $M_3(0, n+2)=\g_0^{-2}$.
Then $M_3(n+1,n)$ and $M_3(n+1, n-1)$ both being red, the indices must match.
$M_3(n+1,n)=r_{lk}$ and $M_3(n+1, r-1)=r_{km}$ with $l<m\in \N$.
In the next round \pa\ plays $(0,1,\ldots n-2)$ and re-uses the node $2$ such that $M_4(0,2)=\g_0^{-3}$.
This time we have $M_4(n,n-1)=\r_{jl}$ for some $j<l<m\in \N$.
Continuing in this manner leads to a decreasing
sequence in $\N$. We have proved the required.
Since $\Cm\At\C=\C$ 
and $\C\notin \bold S_c\Nr_n\CA_{n+3}$ we are done.

(2):  Let $\bold O\in \{\bf S, S_c, S_d\}$. 
If $\Cm\At\A\in \bold O\Nr_n\CA_l$, then $\A\subseteq_d \Cm\At\A$, so $\A\in \bold S_d\bold O\Nr_n\CA_l\subseteq \bold O\Nr_n\CA_l$.
This proves the first part. 
The strictness of the last inclusion follows from Theorem \ref{can} since the atomic countable algebra $\A$ 
constructed in {\it op.cit} is in $\RCA_n$, but $\Cm\At\A$ does not have an $n+3$-square representation, least is in $\bold S\Nr_n\CA_{l}$ for any $l\geq n+3$.
For the last non--inclusion in item (2), we use the set algebras $\A$ and $\E$ in Theorem \ref{square}.
Now $\B\subseteq_d \A$, $\A\in {\sf Cs}_n$, and clearly $\Cm\At\B=\A(\in \Nr_n\CA_{\omega}$).   

\item Follows by definition observing that if $\A$ is finite then 
$\A=\Cm\At\A$. The strictness of the first inclusion follows from the construction in \cite{t} 
where it shown that for any positive $k$, 
there is a {\it finite algebra $\A$} in 
$\bold \Nr_n\CA_{n+k}\sim \bold S\Nr_n\CA_{n+k+1}$.  
The inclusion ${\sf CNPCA}_{n,\omega}^{\bold O}\subseteq \RCA_n$ holds because if $\B\in {\sf CNPCA}_{n,\omega}^{\bold O}$, then 
$\B\subseteq \Cm\At\B\in \bold O\Nr_n\CA_{\omega}\subseteq \RCA_n$.
The $\A$ used in the last item of theorem \ref{can} witnesses  the strictness of the last inclusion proving the last required in this item.

(3): Follows from the definition and the construction used above.

(4):  Follows from that ${\bf S}\Nr_n\CA_l$ is canonical. So if it is atom--canonical too, 
then $\At(\bold S\Nr_n\CA_{l})=\{\F: \Cm\F\in \bold S\Nr_n\CA_l\},$ 
the former class is elementary \cite[Theorem 2.84]{HHbook}, and the last class is elementray $\iff \RCA_{n,l}^{\bold S}$ is 
elementary.
Non--elementarity follows from  \cite[Corollary 3.7.2]{HHbook2} where it is proved that ${\sf RCA}_{n, \omega}^{\bold S}$ is not elementary, together with the fact 
that  $\bigcap_{n<k<\omega}{\bold S}\Nr_n\CA_{k}=\RCA_n$.
In more detail, let $\A_i$ be the sequence of strongly representable $\CA_n$s with $\Cm\At\A_i=\A_i$
and $\A=\Pi_{i/U}\A_i$ is not strongly representable.
Hence $\Cm\At\A\notin \bold S\Nr_n\CA_{\omega}=\bigcap_{i\in \omega}\bold S\Nr_n\CA_{n+i}$, 
so $\Cm\At\A\notin \bold S\Nr_n\K_{l}$ for all $l>k$, for some $k\in \omega$, $k>n$. 
But for each such $l$, $\A_i\in \bold S\Nr_n\CA_l(\supseteq {\sf RCA}_n$), 
so $\A_i$ is a sequence of algebras such that $\Cm\At\A_i=\A_i\in \bold S\Nr_n\CA_{l}$, but 
$\Cm(\At(\Pi_{i/U}\A_i))=\Cm\At\A\notin \bold S\Nr_n\CA_l$, for all $l\geq k$.
That $k$ has to be strictly greater than $n+1$, follows because 
$\bold S\Nr_n\CA_{n+1}$ is atom--canonical. 

(5):  $\Longleftarrow$: Let $l<\omega$. Then the required follows from Theorem \ref{can}  namely, there exists a countable 
$\A\in \Nr_n\CA_l\cap \RCA_n$ such  that $\Cm\At\A\notin \RCA_n$.
Now we prove $\implies$:  
Assume for contradiction that there is an $\A\in {\sf RCA}_n^{\omega, \omega}\cap \sf Count$. 
Then by definition  $\A\in \Nr_n\CA_{\omega}$, so $\A\in \CRCA_n$.  But this complete representation induces a(n ordinary) 
representation of $\Cm\At\A$ 
which is a 
contradiction. 

\end{proof}
\section{Complete and other forms of representations}

\begin{theorem}\label{complete} Let $\alpha$ be any countable ordinal (possibly infinite) and $\A\in \CA_{\alpha}$.
If $\A$ is  atomic with countably many atoms,
then $\A$ is completely representable $\iff \A\in \bold S_c\Nr_{\alpha}\CA_{\alpha}\cap \bf At$. The implication $\implies$ holds with no restriction on the cardinality of atoms. 
\end{theorem}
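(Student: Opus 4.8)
The forward implication is soft and uses no bound on the atoms; the content lies in the converse, which I would obtain by a step--by--step construction powered by the extra dimensions of the dilation. Throughout, by a dilation I mean a $\D\in\CA_{\alpha+\omega}$ with $\A\subseteq_c\Nr_\alpha\D$; for finite $\alpha=n$ this is just $\CA_\omega$, in agreement with the finite--dimensional characterization ${\sf LCA}_n={\bf El}\bold S_c\Nr_n\CA_\omega\cap\bf At$ recalled in the introduction, so the two statements are instances of the uniform dilation dimension $\alpha+\omega$.

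\emph{$(\Longrightarrow)$, no restriction on the atoms.} Suppose $\A$ is completely representable. By the criterion of \cite{HH} recalled after Definition~\ref{cl}, $\A$ is atomic and there is an atomic isomorphism $f:\A\to\C$ onto a ${\sf Gs}_\alpha$ with unit $V=\bigcup_{i\in I}{}^\alpha U_i$ a disjoint union of squares. For each $i$ the full set algebra $\wp({}^\alpha U_i)$ is the $\alpha$--neat reduct of $\wp({}^{\alpha+\omega}U_i)$ via the projection $x\mapsto\{s\restriction\alpha:s\in x\}$, an isomorphism carrying arbitrary unions to unions, hence a \emph{complete} embedding $\wp({}^\alpha U_i)\subseteq_c\Nr_\alpha\wp({}^{\alpha+\omega}U_i)$. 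Since neat reducts commute with products, taking the product over $i$ gives $\D=\prod_{i\in I}\wp({}^{\alpha+\omega}U_i)\in\CA_{\alpha+\omega}$ with $\wp(V)\cong\prod_i\wp({}^\alpha U_i)\subseteq_c\Nr_\alpha\D$. As $f$ is a complete representation it is a complete embedding into $\wp(V)$, and composing complete embeddings yields $\A\subseteq_c\Nr_\alpha\D$, i.e. $\A\in\bold S_c\Nr_\alpha\CA_{\alpha+\omega}\cap\bf At$. No enumeration of atoms enters, which is exactly why this half is unconditional.

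\emph{$(\Longleftarrow)$, countably many atoms.} Fix $\D\in\CA_{\alpha+\omega}$ with $\A\subseteq_c\Nr_\alpha\D$ and build an \emph{atomic} representation of $\A$; by the same criterion of \cite{HH}, atomicity of the representation (every point lies in some $f(a)$, $a\in\At\A$) is precisely complete representability. Because $\A\subseteq_c\Nr_\alpha\D$ and $\A$ is atomic, the join $\sum\At\A=1$ is preserved into $\Nr_\alpha\D$, so the atoms of $\A$ form a covering family there. The plan is a Baire--category / step--by--step argument inside $\D$: working with the chain of atomic networks (equivalently, a generic ultrafilter of $\D$), I would meet countably many dense requirements of two kinds, namely (i) for each node, each $i<\alpha$, and each $b$ with $b\le{\sf c}_i a$, realise a witness for the cylindrifier ${\sf c}_i$, and (ii) at each point place some atom of $\A$, which is the atomicity requirement $f(\sum\At\A)=V$. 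There are only countably many such requirements since both $\At\A$ and $\alpha$ are countable, and the $\omega$ spare dimensions $[\alpha,\alpha+\omega)$ of $\D$ supply, at every stage, a brand new coordinate hosting the required witness, making each cylindrifier requirement dense. A generic filter meeting all of them produces, in the limit, an atomic representation of $\A$.

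The forward half and the reductions above are routine; the real work, and the point where both hypotheses are used, is the density analysis in the converse. The delicate issue is to force the limit representation to be atomic (so that $\sum\At\A$, and with it every existing join, is preserved) \emph{while simultaneously} witnessing every cylindrifier. Countability of $\At\A$ is what keeps the family of requirements countable, so a single generic filter can meet them all; the extra $\omega$ dimensions are what make the cylindrifier requirements dense, since a fresh coordinate is always available to carry a witness without disturbing what has been built. I expect the cleanest packaging to be game--theoretic: $\A\in\bold S_c\Nr_\alpha\CA_{\alpha+\omega}\cap\bf At$ should give \pe\ a \ws\ in the $\omega$--rounded (complete) atomic game, and for countably many atoms that $\omega$--round strategy can be scheduled to address all requirements, yielding the complete representation in the limit, exactly as in the finite--dimensional case underlying Lemma~\ref{flat} and \cite{HHbook2}.
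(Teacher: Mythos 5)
There is a genuine gap, and it sits exactly where this theorem goes beyond the familiar finite-dimensional results: the infinite-dimensional case. Your two halves are not about the same notion of complete representation. In the forward direction you take the complete representation to be onto a ${\sf Gs}_{\alpha}$ whose unit is a disjoint union of genuine Cartesian squares ${}^{\alpha}U_i$; your projection-plus-product argument ($\wp({}^{\alpha}U_i)\cong \Nr_{\alpha}\wp({}^{\alpha+\omega}U_i)$, neat reducts commute with products) is correct under that reading, and for finite $\alpha$ it is essentially the paper's argument. But the paper's proof of this direction assumes the unit is a disjoint union of \emph{weak} spaces ${}^{\alpha}U_i^{(p_i)}$ and builds the dilation from $\C_i=\wp(W_i)$ with $W_i\subseteq {}^{\alpha+\omega}U_i^{(q_i)}$ a weak space, and this is forced: with square units the equivalence is false for infinite $\alpha$. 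Concretely, let $\B=\wp({}^{\omega}\omega^{(Id)})$ be the full weak set algebra. It is atomic with countably many atoms (the singletons), and $\B\cong \Nr_{\omega}\wp(W)$ for the weak space $W={}^{\omega+\omega}\omega^{(q)}$, $q\upharpoonright \omega=Id$ (the projection map $X\mapsto \{s\upharpoonright\omega: s\in X\}$ is an isomorphism), so $\B\in \bold S_c\Nr_{\omega}\CA_{\omega+\omega}\cap {\bf At}$. Yet $\B$ has no complete representation on a disjoint union of squares: in $\B$ one has $1=\sum_{n<\omega}{\sf c}_0\cdots {\sf c}_{n-1}\{Id\}$, so a complete representation $g$ into $\wp(V)$, $V$ a disjoint union of squares ${}^{\omega}U_k$, would give $V=\bigcup_{n}{\sf c}_0\cdots{\sf c}_{n-1}g(\{Id\})$; every point of $g(\{Id\})$ is an injective sequence (it must miss every ${\sf d}_{ij}$, $i<j$), and cylindrifying finitely many coordinates stays inside a square, so every $t\in V$ would be injective on a final segment of $\omega$ — false for the constant sequences in each ${}^{\omega}U_k$. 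So either you read "completely representable" with square units, and then the converse you plan to prove is false for infinite $\alpha$; or you read it, as the paper does, with weak units, and then your forward proof as written does not apply (though your idea survives: replace $\wp({}^{\alpha+\omega}U_i)$ by the power set of a weak space extending ${}^{\alpha}U_i^{(p_i)}$, which is exactly the paper's construction).

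On the converse, which you yourself flag as "the real work", the proposal stops at a plan, and the two points it omits are precisely the ones the paper must handle. First, $\A$ is only assumed to have countably many atoms; $\A$ itself may be uncountable (the $\B$ above has cardinality $2^{\omega}$), so "countably many requirements" needs the reduction the paper makes at the outset ("we can assume that $\A$ is countable and $\D\in {\sf Dc}_{\alpha+\omega}$"), justified because complete representability depends only on the atom structure, i.e. one may pass to $\Tm\At\A$. Second, for infinite $\alpha$ no step-by-step construction can label all $\alpha$-tuples over a countable base in countably many rounds (${}^{\alpha}\Delta$ is uncountable); any Henkin-style construction labels only tuples of finite deviation from ones already present, i.e. it necessarily produces a representation on weak units — which loops back to the first gap. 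The paper's converse is the omitting-types/Baire-category argument of \cite[Theorem 3.2.4]{Sayed}, omitting the single non-principal type of co-atoms; that is the same method your sketch gestures at, so your instinct about the mechanism (spare dimensions make cylindrifier requirements dense; countably many atoms keep the requirement list countable) is right, but as it stands the proposal neither carries out that analysis nor lands on the kind of unit for which the stated equivalence is actually true.
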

\begin{proof} Assume that  $\A\subseteq_c \mathfrak{N}r_{\alpha} \D$.
We can assume that $\A$ is countable and $\D\in \sf Dc_{\alpha+\omega}$.
Now we use exactly the argument \cite[Theorem 3.2.4]{Sayed}, replacing $\Fm_T$ in {\it op.cit} by $\B$. Omitting the one non--principal type of co--atoms,
we get the required complete representation.
Assume that $\Mo$ is the base of a complete representation of $\A$, whose
unit is a weak generalized space,
that is, $1^{\Mo}=\bigcup {}^\alpha U_i^{(p_i)}$ $p_i\in {}^{\alpha}U_i$, where $^{\alpha}U_i^{(p_i)}\cap {}^{\alpha}U_j^{(p_j)}=\emptyset$ for distinct $i$ and $j$, in some
index set $I$, that is, we have an isomorphism $t:\B\to \C$, where $\C\in \sf Gs_{\alpha}$ 
has unit $1^{\Mo}$, and $t$ preserves arbitrary meets carrying
them to set--theoretic intersections.
For $i\in I$, let $E_i={}^{\alpha}U_i^{(p_i)}$. Take  $f_i\in {}^{\alpha+\omega}U_i^{(q_i)}$ where $q_i\upharpoonright \alpha=p_i$
and let $W_i=\{f\in  {}^{\alpha+\omega}U_i^{(q_i)}: |\{k\in \alpha+\omega: f(k)\neq f_i(k)\}|<\omega\}$.
Let ${\C}_i=\wp(W_i)$. Then $\C_i$ is atomic; indeed the atoms are the singletons.
Let $x\in \mathfrak{Nr}_{\alpha}\C_i$, that is ${\sf c}_ix=x$ for all $\alpha\leq i<\alpha+\omega$.
Now if  $f\in x$ and $g\in W_i$ satisfy $g(k)=f(k) $ for all $k<\alpha$, then $g\in x$.
Hence $\mathfrak{Nr}_{\alpha}\C_i$
is atomic;  its atoms are $\{g\in W_i:  \{g(i):i<\alpha\}\subseteq U_i\}.$
Define $h_i: \A\to \mathfrak{Nr}_{\alpha}\C_i$ by
$h_i(a)=\{f\in W_i: \exists a'\in \At\A, a'\leq a;  (f(i): i<\alpha)\in t(a')\}.$
Let $\D=\bold P _i \C_i$. Let $\pi_i:\D\to \C_i$ be the $i$th projection map.
Now clearly  $\D$ is atomic, because it is a product of atomic algebras,
and its atoms are $(\pi_i(\beta): \beta\in \At(\C_i))$.  
Now  $\A$ embeds into $\mathfrak{Nr}_{\alpha}\D$ via $J:a\mapsto (\pi_i(a) :i\in I)$. If $x\in \mathfrak{Nr}_{\alpha}\D$,
then for each $i$, we have $\pi_i(x)\in \mathfrak{Nr}_{\alpha}\C_i$, and if $x$
is non--zero, then $\pi_i(x)\neq 0$. By atomicity of $\C_i$, there is an $\alpha$--ary tuple $y$, such that
$\{g\in W_i: g(k)=y_k\}\subseteq \pi_i(x)$. It follows that there is an atom
of $b\in \A$, such that  $x\cdot  J(b)\neq 0$, and so the embedding is atomic, hence complete.
We have shown that $\A\in \bold S_c\Nr_{\alpha}\CA_{\alpha+ \omega}$
and we are done.  
\end{proof}

Fix $2<n<\omega$. Call an atomic $\A\in \CA_n$ {\it weakly (strongly) representable} $\iff \At\A$ is weakly (strongly) representable.
Let  ${\sf WRCA}_n$ (${\sf SRCA}_n$) denote the class of all such $\CA_n$s, respectively. 
Then the class ${\sf SRCA}_n$  is not elementary and 
${\sf LCA}_n\subsetneq {\sf SRCA}_n\subsetneq {\sf WRCA}_n$ \cite{HHbook2}. 
\begin{theorem}\label{finalresult} Let $2<n<\omega$. Then the following hold: 
\begin{enumerate}
\item $\bold S_c\Nr_n\CA_{\omega}\cap {\sf Count}={\sf CRCA}_n\cap {\sf Count}$,
and ${\bf El}\bold S_c\Nr_n\CA_{\omega}\cap {\bf At}={\sf LCA}_n$, 
\item $\bold S\Nr_n\CA_{\omega}\cap {\bf At}= {\sf WRCA}_n$, and ${\bf P El S}_c\Nr_n\CA_{\omega}\cap {\bf At}\subseteq {\sf SRCA}_n$.
\end{enumerate}
\end{theorem}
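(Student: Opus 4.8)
The plan is to reduce all four statements to facts already in hand: the complete-representability characterization of Theorem~\ref{complete}, the defining equality ${\sf LCA}_n={\bf El}{\sf CRCA}_n$, the identity $\RCA_n=\bold S\Nr_n\CA_\omega$ recorded in the preliminaries, and the cited inclusion ${\sf LCA}_n\subseteq {\sf SRCA}_n$.

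For the first equality of item~1 I would specialize Theorem~\ref{complete} to $\alpha=n$. Its proof in fact places a countable completely representable algebra in $\bold S_c\Nr_\alpha\CA_{\alpha+\omega}$, and for finite $\alpha=n$ one has $n+\omega=\omega$, so this dilation class is $\bold S_c\Nr_n\CA_\omega$. A countable algebra has countably many atoms, so for countable atomic $\A$ the theorem reads $\A$ completely representable $\iff \A\in \bold S_c\Nr_n\CA_\omega$; as complete representability forces atomicity, reading both sides inside $\bf At$ gives $\bold S_c\Nr_n\CA_\omega\cap {\sf Count}={\sf CRCA}_n\cap {\sf Count}$. I note that the $\implies$ half is cardinality-free, so ${\sf CRCA}_n\subseteq \bold S_c\Nr_n\CA_\omega$ holds outright, which I reuse below.

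For the second equality of item~1, ${\bf El}\bold S_c\Nr_n\CA_\omega\cap \bf At={\sf LCA}_n$, I would argue by double inclusion. The inclusion $\supseteq$ is formal: from ${\sf CRCA}_n\subseteq \bold S_c\Nr_n\CA_\omega$ and monotonicity of ${\bf El}$ we get ${\sf LCA}_n={\bf El}{\sf CRCA}_n\subseteq {\bf El}\bold S_c\Nr_n\CA_\omega$, and since ${\sf LCA}_n\subseteq \bf At$ this lands in the right-hand side. For $\subseteq$ I would use the first-order Lyndon-condition sentences $\{\lambda_k\}_{k<\omega}$, for which an atomic algebra lies in ${\sf LCA}_n$ exactly when it satisfies every $\lambda_k$. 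It then suffices to show $\lambda_k\in \mathrm{Th}(\bold S_c\Nr_n\CA_\omega)$ for each $k$; granting this, any atomic $\A\in {\bf El}\bold S_c\Nr_n\CA_\omega$ satisfies all $\lambda_k$ and hence lies in ${\sf LCA}_n$. To see each $\lambda_k$ holds throughout $\bold S_c\Nr_n\CA_\omega$: for atomic $\B\subseteq_c \Nr_n\D$ with $\D\in \CA_\omega$, \pe\ can win the $k$-round atomic game on $\At\B$ by reading her responses off the infinitely many spare dimensions of $\D$ --- this is exactly the modified game (with \pa\ permitted to reuse the pebble pairs) promised in the introduction --- while for an algebra with few or no atoms $\lambda_k$ holds on the atoms actually present, vacuously if there are none. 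The hard part of the whole theorem is precisely this step: checking that the finite Lyndon sentences are valid on all of $\bold S_c\Nr_n\CA_\omega$, i.e.\ that the reuse-games reproduce the finite-round behaviour of the Lyndon games, which is where the genuine combinatorial work (the neat-embedding-to-winning-strategy transfer) sits.

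For item~2, the first equality follows from $\RCA_n=\bold S\Nr_n\CA_\omega$: intersecting with $\bf At$ gives $\bold S\Nr_n\CA_\omega\cap \bf At=\RCA_n\cap \bf At={\sf WRCA}_n$, the forward inclusion of the last equality using $\Tm\At\A\subseteq \A$ and that $\RCA_n$ is a variety. For the final inclusion ${\bf P}{\bf El}\bold S_c\Nr_n\CA_\omega\cap \bf At\subseteq {\sf SRCA}_n$, I would take an atomic product $\A=\prod_i\B_i$ with each $\B_i\in {\bf El}\bold S_c\Nr_n\CA_\omega$. Each factor $\B_i$ is itself atomic, since a nonzero element supported in a single coordinate dominates a product-atom concentrated in that coordinate, which restricts to an atom of $\B_i$. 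Hence $\B_i\in {\bf El}\bold S_c\Nr_n\CA_\omega\cap \bf At={\sf LCA}_n$ by item~1, so $\B_i\in {\sf SRCA}_n$ by the cited inclusion ${\sf LCA}_n\subseteq {\sf SRCA}_n$, that is $\Cm\At\B_i\in \RCA_n$. Finally the atom structure of $\A$ is the disjoint union of the $\At\B_i$, so $\Cm\At\A\cong \prod_i\Cm\At\B_i$, and since $\RCA_n$ is closed under products we conclude $\Cm\At\A\in \RCA_n$, i.e.\ $\A\in {\sf SRCA}_n$, as required.
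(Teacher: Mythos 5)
Your proposal follows essentially the same route as the paper's proof: the first equality of item~1 is the specialization of Theorem~\ref{complete} to $\alpha=n$ (the paper cites the same fact from \cite[Theorem 5.3.6]{Sayedneat}); the second equality is obtained from the same two inclusions, namely ${\sf CRCA}_n\subseteq \bold S_c\Nr_n\CA_{\omega}$ plus the transfer ``neat embedding into an $\omega$-dilation $\Rightarrow$ \pe\ wins the finite atomic games'' combined with elementarity of ${\sf LCA}_n$ (the paper defers this to the argument of Theorem~\ref{square}(1)); and item~2 is the definitional identity $\RCA_n=\bold S\Nr_n\CA_{\omega}$ plus closure of ${\sf SRCA}_n$ under $\bold P$, which you spell out via $\Cm\At(\prod_i\B_i)\cong \prod_i\Cm\At\B_i$ where the paper says ``easy to check.'' Your use of ${\sf LCA}_n={\bf El}{\sf CRCA}_n$ interchangeably with the Lyndon-sentence characterization is the Hirsch--Hodkinson ultrapower/elementary-chain result that the paper re-derives inline; citing it is legitimate.

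The one step that does not survive scrutiny as written is the claim that each Lyndon sentence $\lambda_k$ lies in $\mathrm{Th}(\bold S_c\Nr_n\CA_{\omega})$, justified for non-atomic members by saying $\lambda_k$ ``holds on the atoms actually present, vacuously if there are none.'' The dilation-to-strategy argument needs atomicity: to keep the played networks $N$ consistent (with $N^+\neq 0$ in the dilation $\D$) one must be able to label every edge by an atom of $\A$ meeting a given nonzero element of $\D$, and this uses $\sum\At\A=1$ together with $\A\subseteq_c\Nr_n\D$. So for an algebra in $\bold S_c\Nr_n\CA_{\omega}$ that has some atoms but is not atomic, nothing in your argument guarantees $\lambda_k$; vacuity only covers the atomless case. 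The repair is immediate: atomicity is first-order expressible, so replace $\lambda_k$ by the guarded sentence $\mathrm{(atomic)}\rightarrow\lambda_k$, which does hold in every member of $\bold S_c\Nr_n\CA_{\omega}$ (atomic members by the game argument, all others vacuously), and which still yields $\lambda_k$ for the atomic algebras in ${\bf El}\bold S_c\Nr_n\CA_{\omega}$ to which you apply it. Equivalently, prove the statement in the form ${\bf El}(\bold S_c\Nr_n\CA_{\omega}\cap{\bf At})={\sf LCA}_n$, which is what the paper's proof actually establishes.
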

\begin{proof}
For the first required one uses \cite[Theorem 5.3.6]{Sayedneat}. 
For the second required, show that ${\sf LCA}_n={\bf El}{\sf CRCA}_n={\bf El}(\bold S_c{\sf Nr}_n\CA_{\omega}\cap {\bf At}$). 
Assume that $\A\in {\sf LCA}_n$.
Then, by definition,  for all $k<\omega$, \pe\ has a \ws\ in $G_k(\At\A)$. Using ultrapowers followed by an elementary chain argument like in  \cite[Theorem 3.3.5]{HHbook2},   \pe\ has a \ws\ in
$G_{\omega}(\At\B)$ for some countable $\B\equiv \A$, and so by \cite[Theorem 3.3.3]{HHbook2} $\B$ is completely representable.
Thus $\A\in {\bf El}{\sf CRCA}_n$. One shows that 
${\bf El}(\bold S_c{\sf Nr}_n\CA_{\omega}\cap {\bf At})\subseteq {\sf LCA}_n$ exactly like in item (1) of Theorem \ref{square}.
So ${\sf LCA}_n={\bf El}{\sf CRCA}_n\subseteq {\bf El}(\bold S_c{\sf Nr}_n\CA_{\omega}\cap {\bf At})\subseteq {\sf LCA}_{n}$, 
and we are done. 
The first part of item (2) follows from the definition and the last part follows from that ${\sf LCA}_n\subseteq {\sf SRCA}_n$, and that (it is easy to check that) 
${\sf SRCA}_n$ is closed under $\bold P$.
\end{proof}

\begin{theorem}
For $2<n<\omega$. Then $\CRCA_n$ is not elementary \cite{HH}. 
Furthermore, ${\sf CRCA}_{n} \subseteq \bold S_c{\Nr}_{n}(\CA_{\omega}\cap {\bf At})\cap {\bf At}\subseteq \bold S_c{\Nr}_{n}\CA_{\omega}\cap \bf At$. 
At least two of the previous three  classes are distinct but the elementary closure of each coincides with 
${\sf LCA}_n$. Furthermore, all three classes coincide on the class of atomic  algebras having countably many atoms.
\end{theorem}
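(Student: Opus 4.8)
The plan is to establish the four assertions separately. The non-elementarity of $\CRCA_n$ is \cite{HH}. For the right-hand inclusion of the displayed chain I note that $\CA_\omega\cap{\bf At}\subseteq\CA_\omega$, whence $\Nr_n(\CA_\omega\cap{\bf At})\subseteq\Nr_n\CA_\omega$, and the inclusion follows after applying $\bold S_c$ and intersecting with ${\bf At}$. For the left-hand inclusion I would reexamine the $\Longrightarrow$ half of the proof of Theorem \ref{complete}: from a complete representation with base $\Mo$ it builds the dilation $\D=\prod_{i\in I}\C_i$ with each $\C_i=\wp(W_i)$; each $\C_i$ is atomic, its atoms being the singletons, so the product $\D$ is atomic, and $\A$ embeds completely into $\Nr_n\D$ with $\D\in\CA_{n+\omega}=\CA_\omega$. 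As every completely representable algebra is atomic, this exhibits $\A\in\bold S_c\Nr_n(\CA_\omega\cap{\bf At})\cap{\bf At}$; and, as recorded in Theorem \ref{complete}, this direction needs no bound on the number of atoms, so the inclusion holds for all of $\CRCA_n$.

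For the elementary closures I would use that ${\bf El}$ is monotone and invoke Theorem \ref{finalresult}(1), which supplies both ${\bf El}\,\CRCA_n={\sf LCA}_n$ and ${\bf El}(\bold S_c\Nr_n\CA_\omega\cap{\bf At})={\sf LCA}_n$. Applying ${\bf El}$ to the inclusion chain and squeezing, ${\sf LCA}_n={\bf El}\,\CRCA_n\subseteq{\bf El}(\bold S_c\Nr_n(\CA_\omega\cap{\bf At})\cap{\bf At})\subseteq{\bf El}(\bold S_c\Nr_n\CA_\omega\cap{\bf At})={\sf LCA}_n$, so the elementary closure of each of the three classes is ${\sf LCA}_n$. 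For coincidence on atomic algebras with countably many atoms I would appeal to the full biconditional of Theorem \ref{complete}: if $\A$ is atomic with countably many atoms and lies in the largest class $\bold S_c\Nr_n\CA_\omega\cap{\bf At}$, then $\A$ is completely representable, i.e.\ $\A\in\CRCA_n$, the smallest class. Since the three form an increasing chain, they must agree on the class of atomic algebras with countably many atoms.

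The main obstacle is the strictness. By the chain it suffices to separate the extremes, $\CRCA_n\subsetneq\bold S_c\Nr_n\CA_\omega\cap{\bf At}$, and by the previous paragraph any witness must have uncountably many atoms; in particular no algebra built in the body of this paper (all countable) can serve, so the separation is an intrinsically uncountable phenomenon. I would exhibit an atomic $\A$ with uncountably many atoms on which \pe\ has a \ws\ in the $\omega$-rounded game $G_\omega(\At\A)$ with $\omega$ available nodes. By the game characterisation of the neat embedding property --- the relevant $(\Longleftarrow)$ being the cardinality-free manufacture of an $\omega$-dilation out of a winning strategy, i.e.\ the $m=\omega$ analogue of Lemma \ref{flat}, cf.\ \cite{HHbook2} --- this places $\A$ in $\bold S_c\Nr_n\CA_\omega\cap{\bf At}$.

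One then arranges that $\A$ is \emph{not} completely representable, exploiting exactly the point where the $(\Longleftarrow)$ half of Theorem \ref{complete} breaks down: the step-by-step omitting-types construction there collapses once there are uncountably many atoms, and one builds $\A$ so as to block every complete representation while keeping the winning strategy intact. Any such $\A$, a standard uncountable witness separating ``winning $G_\omega$'' from ``complete representability'' \cite{HH,HHbook2}, lies in $(\bold S_c\Nr_n\CA_\omega\cap{\bf At})\setminus\CRCA_n$, giving $\CRCA_n\subsetneq\bold S_c\Nr_n\CA_\omega\cap{\bf At}$ and hence that at least two of the three classes are distinct. The delicate part I would have to control is the tension between having enough saturation for \pe\ to win $G_\omega$ and having too many atoms for the decisive non-principal type to be omitted; this is also why the argument cannot pin down which of $B=\bold S_c\Nr_n(\CA_\omega\cap{\bf At})\cap{\bf At}$ equals which neighbour, leaving only the weaker ``at least two are distinct''.
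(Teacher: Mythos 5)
Your treatment of the inclusions, of the elementary closures (the squeeze via Theorem \ref{finalresult}), and of the coincidence of the three classes on algebras with countably many atoms (via the biconditional of Theorem \ref{complete}) is sound, and your observation that any separating witness must therefore have uncountably many atoms is correct. The gap is in the only genuinely hard part: producing that witness. Your plan rests on a ``cardinality-free manufacture of an $\omega$-dilation out of a winning strategy,'' i.e.\ the claim that if \pe\ has a \ws\ in $G_\omega(\At\A)$ then $\A\in\bold S_c\Nr_n\CA_\omega$, with no restriction on $|\At\A|$. No such lemma exists in this paper or in \cite{HH,HHbook2}, and it cannot: extracting a dilation (or a complete representation) from a \ws\ is a step-by-step construction in which all potential cylindrifier defects must be scheduled into the $\omega$ rounds of a single play, and this scheduling is exactly what forces countability of the atom set. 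The breakdown of this transfer at uncountable cardinalities is the very phenomenon that makes $\CRCA_n$ and the neat-reduct classes non-elementary, so assuming it holds without cardinality restrictions comes close to assuming what is to be proved. Note also that Lemma \ref{flat}, which you invoke as having an ``$m=\omega$ analogue,'' converts \emph{representations} into dilations, not winning strategies into dilations, and is stated only for finite $m$.

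The paper closes this gap by making membership in the larger class automatic rather than derived from a game: it directly constructs an atomless $\C\in\CA_\omega$ from networks over a relation algebra $\R$ whose atoms are $\Id$, $\g_0^i$ $(i<2^{\kappa})$ and $\r_j$ $(1\leq j<\kappa)$, with monochromatic triangles forbidden, in such a way that $\Ra\C=\R$ and $\B:=\Nr_n\C$ is atomic with uncountably many atoms. Then $\B\in\Nr_n\CA_\omega\cap{\bf At}\subseteq\bold S_c\Nr_n\CA_\omega\cap{\bf At}$ holds trivially (no game is needed), and the Erd\H{o}s--Rado partition theorem shows that $\R$ has no complete representation, whence neither does $\B$, since a complete representation of $\B$ induces one of $\R$. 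This concrete dilation is the content your proposal is missing. Your closing ``tension'' remark is apt, but for a different reason than you give: the paper likewise cannot place its witness in the middle class $\bold S_c\Nr_n(\CA_\omega\cap{\bf At})\cap{\bf At}$, precisely because the dilation $\C$ it builds is atomless, and this is why the theorem claims only that at least two of the three classes differ.
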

\begin{proof} We use the following uncountable version of Ramsey's theorem due to
Erdos and Rado:
If $r\geq 2$ is finite, $k$  an infinite cardinal, then
$exp_r(k)^+\to (k^+)_k^{r+1}$
where $exp_0(k)=k$ and inductively $exp_{r+1}(k)=2^{exp_r(k)}$.
The above partition symbol describes the following statement. If $f$ is a coloring of the $r+1$
element subsets of a set of cardinality $exp_r(k)^+$
in $k$ many colors, then there is a homogeneous set of cardinality $k^+$
(a set, all whose $r+1$ element subsets get the same $f$-value).
Let $\kappa$ be a given infinite cardinal. We shall construct an atomless algebra $\C\in \CA_{\omega}$ such that for all $n<\omega$, $\Nrr_n\C$ 
is atomic having uncountably many atoms, but lacks a complete representation.
An application of Lemma \ref{n} will finish the proof.  
We use a simplified more basic version of a rainbow construction where only 
the two predominent  colours, namely, the reds and blues are available. 
The algebra $\C$ will be constructed from a relation algebra possesing an $\omega$-dimensional cylindric basis.
To define the relation algebra we specify its atoms and the forbidden triples of atoms. The atoms are $\Id, \; \g_0^i:i<2^{\kappa}$ and $\r_j:1\leq j<
\kappa$, all symmetric.  The forbidden triples of atoms are all
permutations of $({\sf Id}, x, y)$ for $x \neq y$, \/$(\r_j, \r_j, \r_j)$ for
$1\leq j<\kappa$ and $(\g_0^i, \g_0^{i'}, \g_0^{i^*})$ for $i, i',
i^*<2^{\kappa}.$ 
Write $\g_0$ for $\set{\g_0^i:i<2^{\kappa}}$ and $\r_+$ for
$\set{\r_j:1\leq j<\kappa}$. Call this atom
structure $\alpha$.  
Consider the term algebra $\R$ defined to be the subalgebra of the complex algebra of this atom structure generated by the atoms.
We claim that $\R$, as a relation algebra,  has no complete representation, hence any algebra sharing this 
atom structure is not completely representable, too. 
Indeed, it is easy to show that if $\A$ and $\B$ 
are atomic relation algebras sharing the same atom structure, so that $\At\A=\At\B$, then $\A$ is completely representable $\iff$ $\B$ is completely representable.

Assume for contradiction that $\R$ has a complete representation $\Mo$.  Let $x, y$ be points in the
representation with $\Mo \models \r_1(x, y)$.  For each $i< 2^{\kappa}$, there is a
point $z_i \in \Mo$ such that $\Mo \models \g_0^i(x, z_i) \wedge \r_1(z_i, y)$.
Let $Z = \set{z_i:i<2^{\kappa}}$.  Within $Z$, each edge is labelled by one of the $\kappa$ atoms in
$\r_+$.  The Erdos-Rado theorem forces the existence of three points
$z^1, z^2, z^3 \in Z$ such that $\Mo \models \r_j(z^1, z^2) \wedge \r_j(z^2, z^3)
\wedge \r_j(z^3, z_1)$, for some single $j<\kappa$.  This contradicts the
definition of composition in $\R$ (since we avoided monochromatic triangles).
Let $S$ be the set of all atomic $\R$-networks $N$ with nodes
$\omega$ such that $\{\r_i: 1\leq i<\kappa: \r_i \text{ is the label
of an edge in $N$}\}$ is finite.
Then it is straightforward to show $S$ is an amalgamation class, that is for all $M, N
\in S$ if $M \equiv_{ij} N$ then there is $L \in S$ with
$M \equiv_i L \equiv_j N$, witness \cite[Definition 12.8]{HHbook} for notation.
Now let $X$ be the set of finite $\R$-networks $N$ with nodes
$\subseteq\kappa$ such that:

1. each edge of $N$ is either (a) an atom of
$\R$ or (b) a cofinite subset of $\r_+=\set{\r_j:1\leq j<\kappa}$ or (c)
a cofinite subset of $\g_0=\set{\g_0^i:i<2^{\kappa}}$ and

2.   $N$ is `triangle-closed', i.e. for all $l, m, n \in \nodes(N)$ we
have $N(l, n) \leq N(l,m);N(m,n)$.  That means if an edge $(l,m)$ is
labelled by $\sf Id$ then $N(l,n)= N(m,n)$ and if $N(l,m), N(m,n) \leq
\g_0$ then $N(l,n)\cdot \g_0 = 0$ and if $N(l,m)=N(m,n) =
\r_j$ (some $1\leq j<\omega$) then $N(l,n)\cdot \r_j = 0$.

For $N\in X$ let $\widehat{N}\in\Ca(S)$ be defined by
$$\set{L\in S: L(m,n)\leq
N(m,n) \mbox{ for } m,n\in \nodes(N)}.$$
For $i\in \omega$, let $N\restr{-i}$ be the subgraph of $N$ obtained by deleting the node $i$.
Then if $N\in X, \; i<\omega$ then $\widehat{\cyl i N} =
\widehat{N\restr{-i}}$.
The inclusion $\widehat{\cyl i N} \subseteq (\widehat{N\restr{-i})}$ is clear.
Conversely, let $L \in \widehat{(N\restr{-i})}$.  We seek $M \equiv_i L$ with
$M\in \widehat{N}$.  This will prove that $L \in \widehat{\cyl i N}$, as required.
Since $L\in S$ the set $T = \set{\r_i \notin L}$ is infinite.  Let $T$
be the disjoint union of two infinite sets $Y \cup Y'$, say.  To
define the $\omega$-network $M$ we must define the labels of all edges
involving the node $i$ (other labels are given by $M\equiv_i L$).  We
define these labels by enumerating the edges and labeling them one at
a time.  So let $j \neq i < \kappa$.  Suppose $j\in \nodes(N)$.  We
must choose $M(i,j) \leq N(i,j)$.  If $N(i,j)$ is an atom then of
course $M(i,j)=N(i,j)$.  Since $N$ is finite, this defines only
finitely many labels of $M$.  If $N(i,j)$ is a cofinite subset of
$\g_0$ then we let $M(i,j)$ be an arbitrary atom in $N(i,j)$.  And if
$N(i,j)$ is a cofinite subset of $\r_+$ then let $M(i,j)$ be an element
of $N(i,j)\cap Y$ which has not been used as the label of any edge of
$M$ which has already been chosen (possible, since at each stage only
finitely many have been chosen so far).  If $j\notin \nodes(N)$ then we
can let $M(i,j)= \r_k \in Y$ some $1\leq k < \kappa$ such that no edge of $M$
has already been labelled by $\r_k$.  It is not hard to check that each
triangle of $M$ is consistent (we have avoided all monochromatic
triangles) and clearly $M\in \widehat{N}$ and $M\equiv_i L$.  The labeling avoided all
but finitely many elements of $Y'$, so $M\in S$. So
$\widehat{(N\restr{-i})} \subseteq \widehat{\cyl i N}$.
Now let $\widehat{X} = \set{\widehat{N}:N\in X} \subseteq \Ca(S)$.
We claim that the subalgebra of $\Ca(S)$ generated by $\widehat{X}$ is simply obtained from
$\widehat{X}$ by closing under finite unions.
Clearly all these finite unions are generated by $\widehat{X}$.  We must show
that the set of finite unions of $\widehat{X}$ is closed under all cylindric
operations.  Closure under unions is given.  For $\widehat{N}\in X$ we have
$-\widehat{N} = \bigcup_{m,n\in \nodes(N)}\widehat{N_{mn}}$ where $N_{mn}$ is a network
with nodes $\set{m,n}$ and labeling $N_{mn}(m,n) = -N(m,n)$. $N_{mn}$
may not belong to $X$ but it is equivalent to a union of at most finitely many
members of $\widehat{X}$.  The diagonal $\diag ij \in\Ca(S)$ is equal to $\widehat{N}$
where $N$ is a network with nodes $\set{i,j}$ and labeling
$N(i,j)=\sf Id$.  Closure under cylindrification is given.
Let $\C$ be the subalgebra of $\Ca(S)$ generated by $\widehat{X}$.
Then $\R = \mathfrak{Ra}(\C)$.
To see why, each element of $\R$ is a union of a finite number of atoms,
possibly a co--finite subset of $\g_0$ and possibly a co--finite subset
of $\r_+$.  Clearly $\R\subseteq\mathfrak{Ra}(\C)$.  Conversely, each element
$z \in \mathfrak{Ra}(\C)$ is a finite union $\bigcup_{N\in F}\widehat{N}$, for some
finite subset $F$ of $X$, satisfying $\cyl i z = z$, for $i > 1$. Let $i_0,
\ldots, i_k$ be an enumeration of all the nodes, other than $0$ and
$1$, that occur as nodes of networks in $F$.  Then, $\cyl
{i_0} \ldots
\cyl {i_k}z = \bigcup_{N\in F} \cyl {i_0} \ldots
\cyl {i_k}\widehat{N} = \bigcup_{N\in F} \widehat{(N\restr{\set{0,1}})} \in \R$.  So $\mathfrak{Ra}(\C)
\subseteq \R$.
Thus $\R$ is relation algebra reduct of $\C\in\CA_\omega$ but has no complete representation.
Let $n>2$. Let $\B=\Nrr_n \C$. Then
Thus $\R$ is relation algebra reduct of $\C\in\CA_\omega$ but has no complete representation.
Let $n>2$. Let $\B=\Nrr_n \C$. Then
$\B\in {\sf Nr}_n\CA_{\omega}$, is atomic, but has no complete representation for plainly a complete representation of $\B$ induces one of $\R$. 
In fact, because $\B$  is generated by its two dimensional elements,
and its dimension is at least three, its
$\Df$ reduct is not completely representable.
We show that the $\omega$--dilation $\C$ is atomless. 
For any $N\in X$, we can add an extra node 
extending
$N$ to $M$ such that $\emptyset\subsetneq M'\subsetneq N'$, so that $N'$ cannot be an atom in $\C$.
Then $\mathfrak{Nr}_n\C$ ($2<n<\omega$) is atomic, but has no complete representation. 
By observing from the proof of the previous 
Theorem that $\Nr_n\CA_{\omega}\subseteq {\sf LCA}_n(={\bf El}\CRCA_n$) and similarly for $\sf RA$s, we have $\Ra\CA_{\omega}\subseteq {\sf LRRA}=({\bf El} \sf CRRA$), 
we get: 
the classes $\CRCA_n$ and $\sf CRRA$ are not elementary.
\end{proof}

Consider the statement:
{\it There exists a countable, complete and  atomic $L_n$ first order theory $T$ in a signature $L$ such that  the type $\Gamma$ 
consisting of co-atoms in the  cylindric Tarski-Lindenbaum quotient algebra $\Fm_T$ is realizable in every {\it $m$--square} model, 
but $\Gamma$ cannot be isolated using $\leq l$ variables, where $n\leq l<m\leq  \omega$.} A co-atom of $\Fm_T$ is the negation of an atom in $\Fm_T$, 
that is to say, is an element of the form $\Psi/\equiv_T$, where $\Psi/\equiv_T=(\neg \phi/{\equiv_T})=\sim (\phi/{\equiv_T})$ and $\phi/{\equiv_T}$ is an atom in $\Fm_T$ (for $L$-fomulas, $\phi$ and $\psi$).
Here the quotient algebra $\Fm_T$is formed relative to the congruence relation of semantical equivalence modulo $T$.
An  $m$-square model of $T$ is an $m$-square representation of $\Fm_T$.
The last statement  denoted by ${\sf not VT}(l, m)$, short for Vaught's Theorem ({\it $\sf VT$) fails at (the parameters) $l$ and $m$.}
Let ${\sf VT}(l, m)$ stand for {\it {\sf VT} holds at $l$ and $m$}, so that by definition ${\sf not VT}(l, m)\iff \neg {\sf VT}(l, m)$. We also include $l=\omega$ in the equation by 
defining ${\sf VT}(\omega, \omega)$ as {\sf VT} holds for $L_{\omega, \omega}$: Atomic countable first order theories have atomic countable models. 
We conjecture that $\sf VT$ {\it fails everywhere} in the sense that for the permitted values $n\leq l, m\leq \omega$, namely, 
for $n\leq l<m\leq \omega$  and $l=m=\omega$, 
$\VT(l, m)\iff l=m=\omega.$  In this direction we have the following strong partial result that seems to confirm our conjecture.

\begin{theorem}\label{fl} For $2<n<\omega$ and $n\leq l<\omega$, ${\sf not VT}(n, n+3)$ and ${\sf not VT}(l, \omega)$ hold.
Furthermore, if for each $n<m<\omega$, there exists a finite relation algebra $\R_m$ having $m-1$ strong blur and no
$m$-dimensional relational basis, then
for $2<n\leq l<m\leq \omega$  and $l=m=\omega$,  $\VT(l, m)\iff l=m=\omega$. 
\end{theorem}
\begin{proof}
We start by the last part. Let $\R_m$ be as in the hypothesis with strong $m-1$--blur $(J, E)$ and $m$-dimensional relational basis. 
We `blow up and blur' $\R_m$ in place of the Maddux algebra 
$\mathfrak{E}_k(2, 3)$ blown up and blurred in \cite[Lemma 5.1]{ANT}, where $k<\omega$ is the number of non--identity atoms 
and  $k$ depends recursively on $l$, giving the desired  $l$--blurness, cf. \cite[Lemmata  4.2, 4.3]{ANT}.
Now take $\A=\mathfrak{Bb}_n(\R_m, J, E)$ as defined in \cite{ANT} to be the $\CA_n$ obtained after blowing up and blurring $\R$ to a weakly representable atom structure $\cal R$. 
Here by \cite[Theorem 3.2 9(iii)]{ANT},  ${\sf Mat}_n\At\cal R$ (the set of $n$-basic matrices on $\At\cal R$) is a $\CA_n$ atom structure and 
$\A$ is an atomic  subalgebra of $\Cm{\sf Mat}_n(\At\cal R)$ containing $\Tm{\sf Mat}_n(\At\cal R)$, cf. \cite{ANT}.  
Then  $\A\in \RCA_n\cap \Nr_n\CA_l$ but $\A$ has no complete $m$-square representation.  
In fact,  by \cite[item (3) pp.80]{ANT},  $\A\cong \Nr_m{\mathfrak Bb}_l(\R_m, J, E)$.The last algebra ${\mathfrak Bb}_l(\R_m, J, E)$ is defined and the iomorphism holds because $\R_m$ has 
a  strong $l$-blur.
A complete $m$--square  representation of  
an atomic $\B\in \CA_n$ induces an $m$--square representation of $\Cm\At\B$.
To see why, assume that $\B$ has an $m$--square  complete representation via $f:\B\to \D$, where $\D=\wp(V)$ and 
the base of the representation $\Mo=\bigcup_{s\in V} \rng(s)$ is $m$--square. Let $\C=\Cm\At\B$.
For $c\in C$, let $c\downarrow=\{a\in \At\C: a\leq c\}=\{a\in \At\B: a\leq c\}$. 
Define, representing $\C$,  
$g:\C\to \D$ by $g(c)=\sum_{x\in c\downarrow}f(x)$, then $g$ is the required homomorphism into $\wp(V)$ having base $\Mo$.
But $\Cm\At\A$ does not have an $m$-square representation, because $\R$ does not have an $m$-dimensional relational basis,  
and $\R\subseteq \Ra\Cm\At\A$. So an $m$-square representation of $\Cm\At\A$ induces one of $\R$ which t
hat $\R$ has no $m$-dimensional relational basis, a contradiction.

We prove ${\sf not Vt}(m-1, m)$, hence the required.
By \cite[\S 4.3]{HMT2}, we can (and will) assume that $\A= \Fm_T$ for a countable, simple and atomic theory $L_n$ theory $T$.  
Let $\Gamma$ be the $n$--type consisting of co--atoms of $T$. Then $\Gamma$ is realizable in every $m$--square model, for if $\Mo$ is an $m$--square model omitting 
$\Gamma$, then $\Mo$ would be the base of a complete  $m$--square  representation of $\A$, and so by Theorem \ref{flat} $\A\in \bold S_c\Nr_n{\sf D}_m$ which is impossible.
Suppose for contradiction that $\phi$ is an $m-1$ witness, so that $T\models \phi\to \alpha$, for
all $\alpha\in \Gamma$, where recall that $\Gamma$ is the set of coatoms.
Then since $\A$ is simple, we can assume
without loss  that $\A$ is a set algebra with 
base $M$ say.
Let $\Mo=(M,R_i)_{i\in \omega}$  be the corresponding model (in a relational signature)
to this set algebra in the sense of \cite[\S 4.3]{HMT2}. Let $\phi^{\Mo}$ denote the set of all assignments satisfying $\phi$ in $\Mo$.
We have  $\Mo\models T$ and $\phi^{\Mo}\in \A$, because $\A\in \Nr_n\CA_{m-1}$.
But $T\models \exists x\phi$, hence $\phi^{\Mo}\neq 0,$
from which it follows that  $\phi^{\Mo}$ must intersect an atom $\alpha\in \A$ (recall that the latter is atomic).
Let $\psi$ be the formula, such that $\psi^{\Mo}=\alpha$. Then it cannot
be the case
that $T\models \phi\to \neg \psi$,
hence $\phi$ is not a  witness,
contradiction and we are done.
Finally, ${\sf not VT}(n, n+3)$ and ${\sf not VT}(l,  \omega)$ $(n\leq l<\omega)$ follow from Theorems \ref{can} and \ref{ANT}.
 \end{proof}

\end{document}